\documentclass[11pt,reqno]{article} 

\usepackage{graphicx}
\graphicspath{ {./images/} }
\usepackage{lipsum}
\usepackage{subcaption}

\usepackage{xcolor}

\usepackage{amsthm}
\usepackage{bbm}
\usepackage{mathrsfs}
\usepackage{dynkin-diagrams}

\usepackage[utf8]{inputenc}

\usepackage{tikz-cd}

\usepackage{enumitem}

\usepackage{amssymb}
\usepackage{float}
\usepackage{amsbsy}
\usepackage[all]{xy}\usepackage{xypic}
\usepackage{braket}
\usepackage[colorlinks = true,citecolor=black]{hyperref}
\usepackage{amsmath}

\usepackage{wrapfig}

\usepackage{authblk}

\usepackage[upint]{stix}

\newcommand\myeq{\mathrel{\stackrel{\makebox[0pt]{\mbox{\normalfont\tiny loc}}}{=}}}

\hypersetup{
     colorlinks=true,
     linkcolor=blue,
     filecolor=blue,
     citecolor = blue,      
     urlcolor=cyan,
     }

\makeatletter
\newcommand\opteq[1]{\mathrel{\mathpalette\opt@eq{#1}}}
\newcommand{\opt@eq}[2]{%
  \begingroup
  \sbox\z@{$#1#2$}%
  \sbox\tw@{\resizebox{!}{.5\ht\z@}{$\m@th#1($}}%
  \nonscript\hskip-\wd\tw@
  \mkern1mu
  \raisebox{-.35\ht\z@}[0pt][0pt]{\resizebox{!}{.5\ht\z@}{$\m@th#1($}}%
  \mkern-1mu
  {#2}%
  \mkern-1mu
  \raisebox{-.35\ht\z@}[0pt][0pt]{\resizebox{!}{.5\ht\z@}{$\m@th#1)$}}%
  \mkern1mu
  \nonscript\hskip-\wd\tw@
  \endgroup
}
\makeatother

\def\XXint#1#2#3{{\setbox0=\hbox{$#1{#2#3}{\int}$}
     \vcenter{\hbox{$#2#3$}}\kern-.5\wd0}}

\newtheorem{theorem}{Theorem}[section]
\newtheorem*{theorem*}{Theorem}
\newtheorem{theorem-non}{Theorem}
\newtheorem{lemma-non}{Lemma}

\theoremstyle{definition} 
\newtheorem{thm}{Theorem}

\theoremstyle{definition}

\newtheorem{conjecture-non}{Conjecture}

\newtheorem{corollary-non}{Corollary}
\newtheorem{proposition}[theorem]{Proposition}
\newtheorem{lemma}[theorem]{Lemma}
\newtheorem*{lemma*}{Lemma}

\newtheorem*{conjecture*}{Conjecture}

\newtheorem{problem}[theorem]{Problem}
\theoremstyle{definition}
\newtheorem{definition}[theorem]{Definition}

\theoremstyle{remark}
\newtheorem{remark}[theorem]{Remark}

\DeclareMathOperator{\rank}{rank}

\numberwithin{equation}{section}

\usepackage{geometry}
\begin{document}
\title{{\bf{Weitzenböck Remainder Spectrum on Rational Homogeneous Varieties}}}

\author[1]{Eder M. Correa}
\author[2]{Lucas Almeida}
\author[3]{Samuel Wainer}
\affil[1,2]{Universidade Estadual de Campinas, Brazil\\

Instituto de Matemática, Estatística e Computação Científica}
\affil[1]{ederc@unicamp.br}
\affil[3]{ Instituto Tecnológico de Aeronáutica, Brazil}


\maketitle

\begin{abstract}
   In this paper, we precisely describe the spectrum of closed invariant $(1,1)$-forms viewed as an operator acting on complex spinor bundles over rational homogeneous varieties. Using this result, we describe the spectrum of the Weitzenböck remainder of ${\rm{Spin}}^{c}$ Dirac operators on rational homogeneous varieties. In particular, we present an explicit formula for their smallest eigenvalue. As a byproduct, we obtain a new lower bound for the eigenvalues of the ${\rm{Spin}}^{c}$ Dirac operator, expressed in terms of Lie-theoretic data. Additionally, combining the Atiyah-Singer index theorem with the Borel-Weil-Bott theorem, we provide a complete classification of ${\rm{Spin}}^{c}$ structures on rational homogeneous varieties which admit harmonic spinors. In this last setting, we present an explicit formula for the index of the associated ${\rm{Spin}}^{c}$ Dirac operator in terms of Lie theory. 
\end{abstract}

\hypersetup{linkcolor=black}
\tableofcontents

\hypersetup{linkcolor=black}

\section{Introduction}

A distinguished feature that makes ${\rm{Spin}}^{c}$ structures so widely studied in complex geometry is that they allow certain complex differential operators (e.g. the Dolbeault operator \cite{hitchin1974harmonic}) to be treated as Dirac operators, unlocking the use of analytical methods from differential geometry, such as the Bochner technique, see for instance \cite{duistermaat1996heat}, \cite{nicolaescu2000notes}, and \cite{roe2013elliptic}.

It was shown by K-D. Kirchberg \cite{kirchberg1986estimation} that on a compact Kähler manifold $(M,g,J)$ the first eigenvalue $\lambda$ of the Dirac operator satisfies
\begin{equation}
\label{eq2}
\lambda^{2} \geq
    \begin{cases}
        \displaystyle \frac{m+1}{4m}\inf_{M}S_{g}, \ \text{if} \ m \ \text{is odd}, \\
        \\
        \displaystyle \frac{m}{4(m-1)}\inf_{M}S_{g}, \ \textrm{if m is even, }
        \end{cases}
\end{equation}
where $S_{g}$ is the scalar curvature of $M$ and $m = \dim_{\mathbbm{C}}(M)$. In the above setting, Kirchberg’s estimates rely essentially on the decomposition of the spinor bundle $\Sigma M$ under the action of the K\"{a}hler form. Kirchberg was the first to adapt Friedrich's techniques \cite{friedrich1980erste} specifically for Kähler manifolds, proving a much sharper bound that depends directly on the complex dimension of the manifold.

In 1994, Oussama Hijazi published a significant work on the eigenvalues of the Dirac operator on compact Kähler manifolds \cite{hijazi_eigenvalues_kahler}. In this study, by modifying the standard twistor operator into a specific Kählerian twistor operator, he provided a much cleaner proof of the bounds and completely characterized the geometric conditions of the limiting spinors. In the same year, Witten introduced in \cite{witten1994monopoles} the Seiberg-Witten equations, showing that by coupling the ${\rm{Spin}}^{c}$ Dirac operator to a gauge field, one could classify smooth 4-manifolds and complex surfaces. 

Following the Seiberg-Witten revolution, differential geometers began actively extending the eigenvalue estimates of Friedrich, Kirchberg, and Hijazi from standard {\rm{Spin}} manifolds to general ${\rm{Spin}}^{c}$ manifolds, e.g. \cite{herzlich1999generalized} and \cite{kim2000einstein}. The core mathematical challenge introduced in these works is that the ${\rm{Spin}}^{c}$ bounds must account not just for the scalar curvature of the manifold, but also for the curvature of the twisting complex line bundle. 

Recently, Nakad and Pilca in \cite{nakad2015eigenvalue} derived an estimate for the eigenvalues of the ${\rm{Spin}}^{c}$ Dirac operator, by restricting the attention to compact Kähler-Einstein manifolds with a specific scalar curvature and endowed with the particular class of ${\rm{Spin}}^c$ structures provided by suitable powers of the irreducible root of the canonical line bundle of $(M,g,J)$. More precisely, they obtained the following estimate
\begin{equation}
\label{eqlb}
    \lambda^{2}  \geq \left( 1 - \frac{q^{2}}{p^{2}} \right) (m+1)^{2}, 
\end{equation}
where $\lambda$ is an eigenvalue of the Dirac operator $\mathcal{D}_{A}$, $m = \dim_{\mathbbm{C}} M$, $p$ is the Fano index of $(M,g,J)$, $q \in \mathbbm{Z}$ is such that ${\bf{L}}^{q}$ is the auxiliary line bundle that induces a ${\rm{Spin}}^c$ structure, where ${\bf{L}}^{p} = {\bf{K}}_{M}$, satisfying $|q| \leq p$ and $p + q \in 2\mathbbm{Z}$. Further, the Kähler-Einstein metric $g$ has scalar curvature equal to $4m(m+1)$. It is worth pointing out that these restrictive conditions on $(M,g,J)$ and ${\bf{L}}$ are assumed in order to ensure the existence of Kählerian Killing ${\rm{Spin}}^{c}$ spinors, see for instance \cite{hijazi2006spin}. The existence of such spinors is a fundamental ingredient to obtain the lower bound described in Eq. (\ref{eqlb}).

A natural question which is motivated by the Nakad and Pilca construction is the following:

\begin{problem}
\label{problem1}
To what extent can we derive suitable lower bounds for the spectrum of the ${\rm{Spin}}^{c}$ Dirac operator $\mathcal{D}_{A}$ on compact Kähler manifolds if we drop the Kähler-Einstein assumption and allow for a general auxiliary line bundle?
\end{problem}
In a general setting, from an analytical perspective, the above problem is highly non-trivial. When we drop the Kähler-Einstein condition and use a general line bundle, the classic Schrödinger-Lichnerowicz formula
\begin{equation}
    {\mathcal{D}_{A}}^2 = {\nabla^A}^* \nabla^{A}+ \frac{1}{4}S_{g}1_{\mathcal{S}(M)} + \frac{1}{2} F_A,
\end{equation}
gives rise to cross-terms involving the non-constant scalar curvature and the curvature of the connection $A$ on the auxiliary line bundle ${\bf{L}}$. 

In an attempt to provide a satisfactory answer for the above question, in this paper, we restrict our attention to the class of compact Kähler manifolds provided by rational homogeneous varieties (complex flag varieties). In this context, we have a precise and constructive description for the underlying Picard group in terms of representation theory of semisimple Lie groups and Lie algebras, e.g. \cite{AZAD} and \cite{correa2023deformed}. These tools, combined with the invariance, allows us to establish the following results:
\begin{enumerate}
\item[(i)] Given a rational homogeneous variety $(X,\omega)$, such that $\omega$ is any invariant Kähler form, we provide a complete description for the spectrum of every closed invariant real $(1,1)$-form viewed as an operator acting on spinor bundles;
\item[(ii)] We provide a complete description for the spectrum of the Weitzenböck remainder operator 
\begin{center}
${\mathcal{D}}_{A}^{2} - {\bf{\Delta}}_{A} \colon \Gamma^{\infty}(\mathcal{S}(X)) \to \Gamma^{\infty}(\mathcal{S}(X))$, 
\end{center}
in terms of the underlying Lie-theoretic data of $(X,\omega)$, including an explicit formula for its smallest eigenvalue.
\item[(iii)] Combining the Atiyah-Singer index theorem with the Borel-Weil-Bott theorem, we provide a complete classification of ${\rm{Spin}}^{c}$ structures on rational homogeneous varieties which admit harmonic spinors. In this last setting, we present an explicit formula for the index of the associated ${\rm{Spin}}^{c}$ Dirac operator in terms of Lie theory.
\item[(iv)] As a consequence of (i) and (ii), we derive a general formula for a lower bound of ${\rm{Spec}}(\mathcal{D}_{A}^{2})$ which depends on the choice of $\omega$ and the auxiliary line bundle ${\bf{L}}$.
\end{enumerate}

In order to state our main results, let us introduce some concepts. A rational homogeneous variety can be described as a quotient $X_{P} = G^{\mathbbm{C}}/P$, where $G^{\mathbbm{C}}$ is a semisimple complex algebraic group with Lie algebra $\mathfrak{g}^{\mathbbm{C}} = {\rm{Lie}}(G^{\mathbbm{C}})$, and $P$ is a parabolic Lie subgroup (Borel-Remmert \cite{BorelRemmert}). Regarding $G^{\mathbbm{C}}$ as a complex analytic space, without loss of generality, we may assume that $G^{\mathbbm{C}}$ is a connected, simply connected, complex simple Lie group. Fixing a compact real form $G \subset G^{\mathbbm{C}}$, one can consider $X_{P} = G/G \cap P$ as a homogeneous $G$-space. In this work, we are interested in $G$-invariant geometric structures on $X_{P}$.

By choosing a Cartan subalgebra $\mathfrak{h} \subset \mathfrak{g}^{\mathbbm{C}}$ and a simple root system $\Delta \subset \mathfrak{h}^{\ast}$, up to conjugation, we have that $P \subset G^{\mathbbm{C}}$ is completely determined by some $I \subset \Delta$, e.g. \cite[\S 3.1]{Akhiezer}. In this setting, considering the associated fundamental weights $\varpi_{\alpha} \in \mathfrak{h}^{\ast}$, $\alpha \in \Delta$, it can be shown that  
\begin{equation}
{\rm{Pic}}(X_{P}) \cong H^{1,1}(X_{P},\mathbbm{Z}) \cong \Lambda_{P}:= \bigoplus_{\alpha \in \Delta \backslash I}\mathbbm{Z}\varpi_{\alpha},
\end{equation}
see for instance  \cite{correa2023deformed}. The aforementioned isomorphisms give us the assignments 
\begin{equation}
\label{map11character}
\begin{cases}

[\omega] \in H^{1,1}(X_{P},\mathbbm{Z}) \mapsto \phi([\omega]) \in \Lambda_{P},\\

{\bf{L}} \in {\rm{Pic}}(X_{P}) \mapsto \phi({\bf{L}}) \in \Lambda_{P}.
\end{cases}
\end{equation}
On a rational homogeneous variety $X_{P}$ we have a canonical ${\rm{Spin}}^{c}$ structure for which the associated complex spinor bundle is defined by 
\begin{equation}
\mathcal{S}(X_{P})_{\text{can.}} \cong \textstyle{\bigwedge^{0,\ast}T^{\ast}X_{P}},
\end{equation}
e.g. \cite{friedrich2000dirac}. Furthermore, it can be shown that every complex spinor bundle over $X_{P}$ associated to some ${\rm{Spin}}^{c}$ structure is obtained, up to isomorphism, from the canonical one together with a holomorphic line bundle ${\bf{L}} \in {\rm{Pic}}(X_{P})$, satisfying the following condition
\begin{equation}
c_{1}({\bf{L}}) = w_{2}(X_{P}) \ (\ {\rm{mod}} \ 2).
\end{equation}
We shall denote by ${\rm{Spin}}^{c}(X_{P}) \subset {\rm{Pic}}(X_{P})$ the set of isomorphism classes of line bundles satisfying the above condition, in other words, ${\rm{Spin}}^{c}(X_{P})$ parametrizes the isomorphism classes of ${\rm{Spin}}^{c}$ structures on $X_{P}$.

In the above setting, from ${\bf{L}} \in {\rm{Spin}}^{c}(X_{P})$, we obtain a complex spinor bundle by twisting the canonical complex spinor bundle with the line bundle ${\bf{E}} = \sqrt{{\bf{L}} \otimes {\bf{K}}_{X_{P}}}$, i.e., the complex spinor bundle associated with the ${\rm{Spin}}^{c}$ structure defined by ${\bf{L}}$ is given by
\begin{equation}
\mathcal{S}(X_{P}) = \mathcal{S}(X_{P})_{\text{can.}} \otimes {\bf{E}}.
\end{equation}
Denoting by $\Phi = \Phi^{+} \cup \Phi^{-}$ the root system associated with $\Delta$, considering  the set of roots $\Phi_{I}^{\pm}:= \Phi^{\pm} \backslash \langle I \rangle^{\pm}$, and setting 
\begin{equation}
\delta_{P} = \sum_{\alpha \in \Phi_{I}^{+}} \alpha \ \ \ \ \ \ \ \delta^{+} = \frac{1}{2} \sum_{\alpha \in \Phi^{+}}\alpha,
\end{equation}
we have the following results.
\begin{thm}
\label{theoremA}
Let $(X_{P},\omega)$ be a rational homogeneous variety, such that $\omega$ is a $G$-invariant K\"{a}hler metric, and let ${\bf{L}} \in {\rm{Spin}}^{c}(X_{P})$ be a ${\rm{Spin}}^{c}$ structure with associated spinor bundle $\mathcal{S}(X_{P})$. Then, the spectrum of the operator $\theta \colon \Gamma^{\infty}(\mathcal{S}(X_{P})) \to \Gamma^{\infty}(\mathcal{S}(X_{P}))$, induced by a closed $G$-invariant real $(1,1)$-form $\theta \in \Omega^{1,1}(X_{P})$, is given explicitly by
\begin{equation}
{\rm{Spec}}(\theta) = \Bigg \{ \sqrt{-1}\sum_{\beta \in \Phi_{I}^{+}} \epsilon_{\beta}\frac{\langle \phi([\theta]),\beta^{\vee} \rangle}{\langle \phi([\omega]),\beta^{\vee} \rangle} \ \ \Bigg | \ \ \epsilon_{\beta} = \pm 1, \forall \beta \in \Phi_{I}^{+} \ \Bigg \},
\end{equation}
such that $\phi([\theta]), \phi([\omega]) \in \Lambda_{P} \otimes \mathbbm{R}$.
\end{thm}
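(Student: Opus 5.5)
Since both $\theta$ and $\omega$ are $G$-invariant, we work pointwise at the base point $o = eP \in X_P$. The tangent space there is
\[
T_o^{1,0}X_P \cong \mathfrak{m}^{+} = \bigoplus_{\beta \in \Phi_I^{+}} \mathfrak{g}_{\beta}.
\]
Homogeneity then gives the spectrum everywhere, because the Clifford action of a $2$-form is a zeroth-order bundle endomorphism. A $G$-equivariant isometry carries it to a conjugate endomorphism at every other point. Also, twisting by the line bundle ${\bf E}$ does not affect the Clifford action of forms. So it suffices to compute the eigenvalues of $\theta_o$ acting by Clifford multiplication on $\bigwedge^{0,\ast}T_o^{\ast}X_P$.

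\textbf{Step 1: root vectors diagonalize both forms.} We use the standard description of closed invariant $(1,1)$-forms on $X_P$ (as in \cite{AZAD}, \cite{correa2023deformed}). With Chevalley root vectors $E_{\pm\beta}$, we have
\[
\theta_o\bigl(E_{\beta}, E_{-\beta}\bigr) = c\sqrt{-1}\,\langle \phi([\theta]),\beta^{\vee}\rangle,
\]
up to a normalization constant $c$ common to all invariant forms. Moreover $\theta_o(E_\beta, E_{-\gamma}) = 0$ for $\beta \neq \gamma$. This vanishing follows from $T$-invariance: $\theta_o$ has weight zero under the maximal torus. The same formula holds for $\omega$, and $\langle \phi([\omega]),\beta^{\vee}\rangle > 0$ for every $\beta \in \Phi_I^+$ because $\omega$ is Kähler. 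Consequently the vectors
\[
e_\beta = \frac{E_\beta}{\sqrt{\langle \phi([\omega]),\beta^\vee\rangle}}
\]
(suitably normalized) form a unitary basis of $T^{1,0}_o$ for $g_\omega$, and $\theta_o$ is diagonal in this basis with entries
\[
\lambda_\beta = \frac{\langle \phi([\theta]),\beta^{\vee} \rangle}{\langle \phi([\omega]),\beta^{\vee} \rangle}.
\]
In other words, we simultaneously diagonalize the Hermitian form of $\theta$ with respect to $\omega$.

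\textbf{Step 2: Clifford action of a diagonal $(1,1)$-form.} Write $\theta_o = \frac{\sqrt{-1}}{2}\sum_\beta \lambda_\beta\, e^\beta\wedge \bar e^\beta$ in the unitary coframe; equivalently, in a real orthonormal frame,
\[
\theta_o = \sum_\beta \lambda_\beta\, x_\beta \wedge y_\beta, \qquad e_\beta = \tfrac{1}{\sqrt2}(x_\beta - \sqrt{-1}\, y_\beta).
\]
The Clifford action of $x_\beta\wedge y_\beta$ is $x_\beta\cdot y_\beta$. These operators pairwise commute for distinct $\beta$ and each squares to $-1$, so their joint eigenvalues are $\pm\sqrt{-1}$. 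On $\bigwedge^{0,\ast}$, the operator $x_\beta\cdot y_\beta$ acts on $\bar e^{J}$ by $\sqrt{-1}\epsilon_\beta$, where $\epsilon_\beta = +1$ or $-1$ according to whether $\beta \in J$ or not, up to a global sign convention. The monomials $\bar e^{J}$, $J\subset \Phi_I^+$, therefore form a common eigenbasis. The eigenvalue on $\bar e^J$ is $\sqrt{-1}\sum_\beta \epsilon_\beta\lambda_\beta$, and the $2^{|\Phi_I^+|}$ subsets $J$ realize every sign pattern. This yields exactly the set in the statement, with multiplicities absorbed.

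\textbf{Main obstacle.} The delicate step is Step 1. We must get the precise normalization, so that $\theta$ and $\omega$ carry the same constant $c$ and the ratio is exactly $\langle\phi([\theta]),\beta^\vee\rangle/\langle\phi([\omega]),\beta^\vee\rangle$. Two conventions need checking here: the factor $\frac12$ in Clifford multiplication of $2$-forms, and the normalization of $\phi$. Our plan is to take the local potential description
\[
\theta = \sqrt{-1}\,\partial\bar\partial \sum_{\alpha\in\Delta\setminus I} \langle \phi([\theta]),\alpha^\vee\rangle \log\|v_{\varpi_\alpha}\|^2
\]
from \cite{correa2023deformed}. We then evaluate it at $o$ on $E_\beta, E_{-\beta}$ via the derivative of the highest-weight norm, which gives $\langle \varpi_\alpha,\beta^\vee\rangle$ times a universal constant. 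Since $\omega$ has the same form, the constant cancels in the ratio. Any residual overall factor (such as $2$) coming from the Clifford convention is then fixed by comparing with the classical case $\theta=\omega$. In that case the spectrum must be $\sqrt{-1}(m-2k)$, with $m=|\Phi_I^+|$, matching the standard action of the Kähler form on $\bigwedge^{0,k}$.
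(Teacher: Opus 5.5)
Your proposal is correct and follows essentially the same route as the paper. Your Step 1 is the paper's Proposition \ref{eigenvalueatorigin}: the Hermitian forms of $\theta$ and $\omega$ are diagonalized at $o$ via the $T$-weights of the root vectors, and the universal constant from the potentials $\log\|g v^{+}_{\varpi_\alpha}\|$ cancels in the ratio; your Step 2 is the paper's observation that the commuting operators $e_\beta\cdot J(e_\beta)$ have joint eigenvalues $\sqrt{-1}\epsilon_\beta$. One cosmetic slip: with a unitary coframe one has $\omega=\sqrt{-1}\sum_\beta e^\beta\wedge\bar e^\beta$, so your $\frac{\sqrt{-1}}{2}$ expression is not literally equivalent to $\sum_\beta\lambda_\beta\, x_\beta\wedge y_\beta$, but the real orthonormal form is the one you actually use, and it is correct.
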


From Theorem \ref{theoremA} and the previous ideas, we have the following result that encompasses several important global aspects of ${\rm{Spin}}^{c}$ geometry on rational homogeneous varieties in terms of the combinatorics of roots and weights.

\begin{thm}
\label{theoremB}
Let $(X_{P},\omega)$ be a rational homogeneous variety, such that $\omega$ is a $G$-invariant K\"{a}hler metric. Then, the following hold:
\begin{enumerate}
\item[(A)] ${\bf{L}} \in {\rm{Spin}}^{c}(X_{P})$ if, and only if,
\begin{equation}
\int_{\mathbbm{P}_{\alpha}^{1}}c_{1}({\bf{L}}) = \langle \delta_{P},\alpha^{\vee} \rangle \ ({\rm{mod}} \ 2), \ \ \forall \alpha \in \Delta \backslash I,
\end{equation}
where $[\mathbbm{P}_{\alpha}^{1}] \in \pi_{2}(X_{P})$, $\alpha \in \Delta \backslash I$, are the generators of the cone of curves ${\rm{NE}}(X_{P})$;
\item[(B)] If ${\mathcal{D}}_{A} \colon \Gamma^{\infty}({\mathcal{S}}(X_{P})) \to \Gamma^{\infty}({\mathcal{S}}(X_{P}))$ is the Dirac operator defined by the underlying Yang-Mills connection $A$ on some ${\bf{L}} \in {\rm{Spin}}^{c}(X_{P})$, then
\begin{equation}
{\rm{Spec}}\big ( {\mathcal{D}}_{A}^{2} - {\bf{\Delta}}_{A}\big ) = \Bigg \{ \pi  \sum_{\beta \in \Phi_{I}^{+}}\frac{\langle \delta_{P},\beta^{\vee} \rangle + \epsilon_{\beta} \langle \phi({\bf{L}}), \beta^{\vee}\rangle}{\langle \phi([\omega]), \beta^{\vee}\rangle} \ \ \Bigg | \ \ \epsilon_{\beta} = \pm 1, \forall \beta \in \Phi_{I}^{+}\Bigg \},
\end{equation}
such that $\phi({\bf{L}}) \in \Lambda_{P}$ and $\phi([\omega]) \in \Lambda_{P} \otimes \mathbbm{R}$;
\item[(C)] In the setting of item (B), we have the smallest eigenvalue of the operator defined by the Weitzenböck remainder ${\mathcal{D}}_{A}^{2} - {\bf{\Delta}}_{A} \colon \Gamma^{\infty}({\mathcal{S}}(X_{P})) \to \Gamma^{\infty}({\mathcal{S}}(X_{P}))$ given by
\begin{equation}
\label{smallestWR}
\lambda_{\rm{min}} \big ({\mathcal{D}}_{A}^{2} - {\bf{\Delta}}_{A} \big ) =  \pi  \sum_{\beta \in \Phi_{I}^{+}}\frac{\langle \delta_{P},\beta^{\vee} \rangle -  |\langle \phi({\bf{L}}), \beta^{\vee}\rangle|}{\langle \phi([\omega]), \beta^{\vee}\rangle}.
\end{equation}

\item[(D)] In the setting of item (B), the Dirac operator ${\mathcal{D}}_{A} \colon \Gamma^{\infty}({\mathcal{S}}(X_{P})) \to \Gamma^{\infty}({\mathcal{S}}(X_{P}))$ admits a harmonic spinor if and only if the weight
\begin{equation}
\phi({\bf{E}}) = (1/2)(\phi({\bf{L}}) - \delta_{P}) \in \Lambda_{P},
\end{equation}
is a regular weight, i.e., if $\exists ! w \in \mathscr{W}_{\mathfrak{g}^{\mathbbm{C}}}$, such that $w \star \phi({\bf{E}}) \in \Lambda^{+}$. In particular, if $\phi({\bf{E}})$ is a regular weight, then 
\begin{equation}
{\rm{Index}}(\mathcal{D}_{A}) =  (-1)^{\ell(w)}\frac{\displaystyle{\Pi_{\alpha \in \Phi^{+}}\langle w(\phi({\bf{E}}) + \delta^{+}), \alpha \rangle}}{\displaystyle{\Pi_{\alpha \in \Phi^{+}}\langle \delta^{+},\alpha \rangle}},
\end{equation}
where $\ell(w)$ is the length of the unique element $w \in \mathscr{W}_{\mathfrak{g}^{\mathbbm{C}}}$, such that $w \star \phi({\bf{E}}) \in \Lambda^{+}$.
\end{enumerate}
\end{thm}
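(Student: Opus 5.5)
We prove the four items in order; (A) and (B) feed into (C), while (D) is independent.

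\emph{Item (A).} We compare $c_1({\bf L})$ with $w_2(X_P)$ modulo $2$. Since $X_P$ is simply connected with torsion-free cohomology concentrated in even degrees, $w_2(X_P)$ is the reduction of $c_1(TX_P)=c_1({\bf K}_{X_P}^{-1})$. It is classical that $\phi({\bf K}_{X_P}^{-1})=\delta_P$. The classes $[\mathbbm{P}^1_\alpha]$, $\alpha\in\Delta\backslash I$, form a basis of $H_2(X_P,\mathbbm{Z})$ dual to the fundamental weights, so $\int_{\mathbbm{P}^1_\alpha}c_1({\bf F})=\langle\phi({\bf F}),\alpha^\vee\rangle$ for every line bundle ${\bf F}$. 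The condition $c_1({\bf L})\equiv c_1(TX_P)$ mod $2$ is then equivalent to the stated congruences, because a class in $H^2(X_P,\mathbbm{Z})$ is even if and only if its pairings with this basis are all even.

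\emph{Items (B) and (C).} Start from the Schrödinger--Lichnerowicz formula
\[
\mathcal{D}_A^2-{\bf\Delta}_A=\tfrac14 S_g+\tfrac12 F_A .
\]
Here $F_A$ is the curvature of the invariant (Yang--Mills, hence $G$-invariant harmonic) connection on ${\bf L}$. It acts by Clifford multiplication and equals $-2\pi\sqrt{-1}$ times the invariant representative $\theta_{\bf L}$ of $c_1({\bf L})$, up to the paper's normalisation. By Theorem \ref{theoremA} applied to $\theta=\theta_{\bf L}$, with $\phi([\theta_{\bf L}])=\phi({\bf L})$, the spectrum of $\tfrac12 F_A$ is
\[
\Big\{\pi\sum_{\beta\in\Phi_I^+}\epsilon_\beta\frac{\langle\phi({\bf L}),\beta^\vee\rangle}{\langle\phi([\omega]),\beta^\vee\rangle}\Big\}.
\]
We must make sure the $\sqrt{-1}$ cancels and fix the constant $\pi$.

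Next, $S_g$ is constant by homogeneity. We compute it as $2\Lambda_\omega\rho$, where $\rho$ is the invariant Ricci form with $\phi([\rho])=2\pi\delta_P$ (up to convention). In the invariant frame, $\omega$ and $\rho$ are simultaneously diagonal on the root spaces $\mathfrak g_{-\beta}$, $\beta\in\Phi_I^+$, with eigenvalues proportional to $\langle\phi([\omega]),\beta^\vee\rangle$ and $\langle 2\pi\delta_P,\beta^\vee\rangle$. This gives
\[
\tfrac14 S_g=\pi\sum_{\beta\in\Phi_I^+}\frac{\langle\delta_P,\beta^\vee\rangle}{\langle\phi([\omega]),\beta^\vee\rangle}.
\]
This diagonalisation is exactly the one underlying Theorem \ref{theoremA}, so the same computation applies. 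Because the scalar term is a multiple of the identity, it shifts every eigenvalue of $\tfrac12F_A$ by the same amount, which yields (B).

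For (C), minimise over the signs $\epsilon_\beta$ independently: take $\epsilon_\beta=-\operatorname{sign}\langle\phi({\bf L}),\beta^\vee\rangle$. This uses $\langle\phi([\omega]),\beta^\vee\rangle>0$ for all $\beta\in\Phi_I^+$, which holds because $\omega$ is Kähler.

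\emph{Item (D).} We identify $\mathcal{S}(X_P)=\bigwedge^{0,*}\otimes{\bf E}$. By Hodge theory for Kähler manifolds, $\mathcal{D}_A$ is $\sqrt2(\bar\partial+\bar\partial^*)$ twisted by ${\bf E}$, so $\ker\mathcal{D}_A\cong\bigoplus_q H^q(X_P,\mathcal{O}({\bf E}))$. We also need $\phi({\bf E})=\tfrac12(\phi({\bf L})-\delta_P)$, which follows from $\phi({\bf K}_{X_P})=-\delta_P$.

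The Borel--Weil--Bott theorem then says the following. If $\phi({\bf E})+\delta^+$ is singular, all cohomology vanishes. Otherwise there is a unique $w$ with $w\star\phi({\bf E})\in\Lambda^+$, and cohomology is concentrated in degree $\ell(w)$, where it is the irreducible module $V(w\star\phi({\bf E}))$. Hence harmonic spinors exist exactly in the regular case. The index is $\sum_q(-1)^q\dim H^q=(-1)^{\ell(w)}\dim V(w\star\phi({\bf E}))$. The Weyl dimension formula gives this dimension using $w\star\lambda+\delta^+=w(\lambda+\delta^+)$, and it is consistent with the Atiyah--Singer index theorem.

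\emph{Main obstacle.} We expect the hardest step to be the normalisation bookkeeping in (B). This means matching the Clifford action of $F_A$ and the scalar curvature to the Lie-theoretic eigenvalues with the precise factor $\pi$, and checking that the sign conventions of Theorem \ref{theoremA} produce a real spectrum. We would first do this by computing on $\mathbbm{P}^1$, i.e. $G=SU(2)$, and then extend root by root.
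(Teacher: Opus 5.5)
Your proposal is correct and follows essentially the same route as the paper: (A) compares $c_1({\bf L})$ with $c_1({\bf K}_{X_P}^{-1})$ using $\phi({\bf K}_{X_P}^{-1})=\delta_P$ and the basis $[\mathbbm{P}^1_\alpha]$ dual to the fundamental weights; (B)--(C) combine the Schr\"odinger--Lichnerowicz formula, $S_g=2\Lambda_\omega\rho_0$, and Theorem~A applied to $\theta=\frac{\sqrt{-1}}{2\pi}F_A$, where the factor $\frac12\cdot(-2\pi\sqrt{-1})\cdot\sqrt{-1}=\pi$ indeed makes the spectrum real; and (D) uses the Dolbeault description of $\mathcal{D}_A$, Borel--Weil--Bott and the Weyl dimension formula. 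Your explicit observation that $\langle\phi([\omega]),\beta^\vee\rangle>0$ for all $\beta\in\Phi_I^+$, which justifies choosing $\epsilon_\beta=-\operatorname{sign}\langle\phi({\bf L}),\beta^\vee\rangle$ in (C), is a detail the paper leaves implicit.
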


In the above theorem, from item (B), we have a complete description for the spectrum of the Weitzenböck remainder operator ${\mathcal{D}}_{A}^{2} - {\bf{\Delta}}_{A}$. Also, in item (C), we provide an explicit formula for the smallest eigenvalue of this operator in terms of the Lie-theoretic data underlying $X_{P}$. 

The dependence on $\mathbf{L}$ and $[\omega]$ in the formula for the smallest eigenvalue (Eq. \ref{smallestWR}) is particularly significant: it allows one to determine, through a straightforward computation, whether the harmonic spinor equation
\begin{equation}
\mathcal{D}_{A} \psi = 0, \ \ \psi \in \Gamma^{\infty}(\mathcal{S}(X_{P})),
\end{equation}
can not be solved for an arbitrary ${\rm{Spin}}^{c}$ structure ${\bf{L}}$ on $(X_{P},\omega)$. In fact, it follows from Eq. (\ref{smallestWR}) and from \cite[Theorem 1.1]{hitchin1974harmonic} that, if 
\begin{equation}
 \pi  \sum_{\beta \in \Phi_{I}^{+}}\frac{\langle \delta_{P},\beta^{\vee} \rangle -  |\langle \phi({\bf{L}}), \beta^{\vee}\rangle|}{\langle \phi([\omega]), \beta^{\vee}\rangle} > 0,
 \end{equation}
then the ${\rm{Spin}}^{c}$ structure defined by ${\bf{L}}$ on $(X_{P},\omega)$ admits no harmonic spinors. 

The result of item (D) of Theorem \ref{theoremB} completely classifies the ${\rm{Spin}}^{c}$ structures ${\bf{L}}$ on $X_P$ that admit harmonic spinors, in summary:
\begin{equation}
\exists \psi \in \Gamma^{\infty}(\mathcal{S}(X_{P})), \ \ \mathcal{D}_{A}\psi = 0 \iff \ \ \exists ! w \in \mathscr{W}_{\mathfrak{g}^{\mathbbm{C}}}, \text{ s.t. } w \star \phi({\bf{E}}) \in \Lambda^{+}.
\end{equation}
As we see from above, we have a concrete and checkable criterion using the combinatorics of roots and the Weyl group for the existence of harmonic spinors. Here we consider the shifted action
\begin{equation}
w \star \lambda := w(\lambda + \delta^{+}) - \delta^{+},
\end{equation}
for every $\lambda \in \mathfrak{h}^{\ast}$ and every $w \in \mathscr{W}_{\mathfrak{g}^{\mathbbm{C}}}$. The explicit index formula provided by item (D) is achieved by means of two classical theorems: the Atiyah-Singer index theorem (\cite{AtiyahSinger1968c}) and the Borel-Weil-Bott theorem \cite{serre1954representations}, \cite{bott1957homogeneous}, \cite{demazure1968demonstration}, \cite{demazure1976very}. 

It is worth mentioning that the key ideas to prove item (D) fit into a rich tradition of results linking Dirac operators on homogeneous spaces with representation theory via the Borel–Weil–Bott theorem. In the setting of homogeneous spaces $G/H$, Slebarski \cite{Slebarski1987,Slebarski1987b} and later Landweber \cite{Landweber2000} studied the kernels of twisted Dirac operators, showing that they can be described in terms of representation-theoretic data. More recently, Huang and Pand{\v{z}}i{\'c} \cite{HuangPandzic2006} developed the powerful framework of Dirac cohomology, which provides a conceptual generalization of the Borel–Weil–Bott theorem. Hong \cite{Hong2014} gave a proof of the Borel-Weil-Bott theorem using Kostant's cubic Dirac operator. Our contribution extends these results in three directions: (i) we work on arbitrary rational homogeneous varieties $X_{P}$ (not necessarily full flag manifolds\footnote{Full flag manifolds always admit a (real) ${\rm{Spin}}$ structure, see \cite[p. 673]{alekseevsky2019spin})}), (ii) we consider ${\rm{Spin}}^{c}$ structures, and (iii) we provide an explicit closed formula for the index in terms of the length of a Weyl group element and root products, which, to the best of our knowledge, is new in this generality.

Our next result can be understood as an application of the machinery developed in Theorem \ref{theoremB} to tackle Problem \ref{problem1} in the setting of rational homogeneous varieties. More precisely, we have the following.

\begin{thm}
\label{theoremC}
Let $(X_{P},\omega)$ be a rational homogeneous variety, such that $\omega$ is a $G$-invariant K\"{a}hler metric, and let ${\bf{L}} \in {\rm{Spin}}^{c}(X_{P})$ be a ${\rm{Spin}}^{c}$ structure with associated complex spinor bundle $\mathcal{S}(X_{P})$. If ${\mathcal{D}}_{A} \colon \Gamma^{\infty}({\mathcal{S}}(X_{P})) \to \Gamma^{\infty}({\mathcal{S}}(X_{P}))$ is the Dirac operator defined by the underlying Yang-Mills connection $A$ on ${\bf{L}}$, then any eigenvalue $\lambda$ of ${\mathcal{D}}_{A}$ satisfies the following
\begin{equation}
\label{lbcombinatoric}
\lambda^{2} \geq \pi  \sum_{\beta \in \Phi_{I}^{+}}\frac{\langle \delta_{P},\beta^{\vee} \rangle -  |\langle \phi({\bf{L}}), \beta^{\vee}\rangle|}{\langle \phi([\omega]), \beta^{\vee}\rangle},
\end{equation}
such that $\phi({\bf{L}}) \in \Lambda_{P}$ and $\phi([\omega]) \in \Lambda_{P} \otimes \mathbbm{R}$.
\end{thm}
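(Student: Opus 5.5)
We derive Theorem \ref{theoremC} directly from item (C) of Theorem \ref{theoremB} via the Schrödinger--Lichnerowicz formula and a standard Bochner-type integration argument. By that formula, the Weitzenböck remainder
\begin{equation*}
\mathcal{R}_{A} := {\mathcal{D}}_{A}^{2} - {\bf{\Delta}}_{A} = \frac{1}{4}S_{g}1_{\mathcal{S}(X_{P})} + \frac{1}{2}F_{A}
\end{equation*}
is a zeroth-order, self-adjoint endomorphism of $\mathcal{S}(X_{P})$, where ${\bf{\Delta}}_{A} = {\nabla^{A}}^{\ast}\nabla^{A}$. Since $\omega$ is $G$-invariant and $A$ is the Yang--Mills (invariant, harmonic curvature) connection, $S_{g}$ is constant and $F_{A}$ is a closed $G$-invariant $(1,1)$-form. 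Hence Theorem \ref{theoremA} applies to each summand. It shows that the pointwise eigenvalues of $\mathcal{R}_{A}$ are independent of the point of $X_{P}$ and form exactly the finite set described in item (B) of Theorem \ref{theoremB}. Its minimum $\lambda_{\min}(\mathcal{R}_{A})$ is the right-hand side of Eq. (\ref{smallestWR}), by item (C).

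The key step is the pointwise inequality $\langle \mathcal{R}_{A}\psi,\psi\rangle \geq \lambda_{\min}(\mathcal{R}_{A})|\psi|^{2}$ at every point $x \in X_{P}$ and for every $\psi \in \Gamma^{\infty}(\mathcal{S}(X_{P}))$. This follows from diagonalizing the Hermitian endomorphism $\mathcal{R}_{A}|_{x}$ in an orthonormal basis of the fiber $\mathcal{S}(X_{P})_{x}$. The one point to verify is that the spectrum in item (B) of Theorem \ref{theoremB} is the pointwise fiber spectrum, i.e.\ the eigenvalues of the endomorphism on each fiber, and not merely some global spectrum. This holds because Theorem \ref{theoremA} is established through the isotropy representation: at the origin $o = eP$, the form $F_{A}$ acts by Clifford multiplication on $\bigwedge^{0,\ast}T_{o}^{\ast}X_{P} \otimes {\bf{E}}_{o}$, diagonally in the root-vector basis indexed by $\Phi_{I}^{+}$. $G$-equivariance then transports this diagonalization to every fiber.

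Now let $\mathcal{D}_{A}\psi = \lambda\psi$ with $\psi \neq 0$. Since $\mathcal{D}_{A}$ is formally self-adjoint, $\lambda \in \mathbb{R}$. Integrating over the compact manifold $X_{P}$ gives
\begin{equation*}
\lambda^{2}\|\psi\|^{2} = \|\mathcal{D}_{A}\psi\|^{2} = \|\nabla^{A}\psi\|^{2} + \int_{X_{P}}\langle \mathcal{R}_{A}\psi,\psi\rangle \, d{\rm{vol}}_{g} \geq \lambda_{\min}(\mathcal{R}_{A})\|\psi\|^{2}.
\end{equation*}
Dividing by $\|\psi\|^{2}$ and substituting Eq. (\ref{smallestWR}) yields Eq. (\ref{lbcombinatoric}).

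The main obstacle lies in the input from Theorem \ref{theoremB}, which we take as given. Beyond that, the only nontrivial check is the fiberwise nature of the spectrum discussed above, which rests on homogeneity. The remaining work is the routine integration by parts.
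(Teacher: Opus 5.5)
Your proposal is correct and takes essentially the same route as the paper. For an eigenspinor $\psi$, integration gives $\lambda^{2}\|\psi\|^{2} = \|\nabla^{A}\psi\|^{2} + \langle ({\mathcal{D}}_{A}^{2} - {\bf{\Delta}}_{A})\psi,\psi\rangle \geq \lambda_{\rm{min}}({\mathcal{D}}_{A}^{2} - {\bf{\Delta}}_{A})\|\psi\|^{2}$, and item (C) of Theorem \ref{theoremB} then supplies $\lambda_{\rm{min}}$. Your explicit check that the spectrum in item (B) is the fiberwise spectrum of a Hermitian endomorphism with point-independent eigenvalues, which is what lets the pointwise bound be integrated, spells out a step the paper uses only implicitly.
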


The result above provides a general formula of a lower bound for the eigenvalues of the ${\rm{Spin}}^{c}$ Dirac operator $\mathcal{D}_{A}$. This lower bound generalizes the classical Friedrich inequality \cite{friedrich1980erste} to the $\mathrm{Spin}^{c}$ setting on rational homogeneous varieties, and it reduces to a purely combinatorial expression that depends explicitly on the choice of ${\bf{L}}$ and $[\omega]$. Notice that the lower bound is exactly the smallest eigenvalue $\lambda_{\rm{min}} \big ({\mathcal{D}}_{A}^{2} - {\bf{\Delta}}_{A} \big )$ of the associated Weitzenböck remainder provided in item (C) of Theorem \ref{theoremB}.

Although Theorem \ref{theoremC} provides a general method to obtain an explicit lower bound for the eigenvalues of ${\rm{Spin}}^{c}$ Dirac operators, it is worth observing the following. Let $p$ be the Fano index of $X_{P}$ and consider the primitive root ${\bf{L}} \in {\rm{Pic}}(X_{P})$ of ${\bf{K}}_{X_{P}}$, that is, ${\bf{L}}^{p} = {\bf{K}}_{X_{P}}$. Given $q \in \mathbbm{Z}$, such that $p + q \in 2\mathbbm{Z}$, we have ${\bf{L}}^{q} \in {\rm{Spin}}^{c}(X_{P})$. Further, taking the Kähler-Einstein metric $\omega$ with scalar curvature equal to $4m(m+1)$, it can be shown that 
\begin{equation}
\label{lbDirac}
\phi([\omega]) =\frac{\pi}{(m+1)}\delta_{P} \ \ \ \ \text{and} \ \ \ \ \phi({\bf{L}}) = -\frac{q}{p}\delta_{P},
\end{equation}
here we consider $m = \dim_{\mathbbm{C}}(X_{P})$. Applying the above data to Eq. (\ref{lbDirac}), we obtain 
\begin{equation}
    \lambda^{2} \geq \left(1 - \frac{|q|}{p}\right)m(m+1).
\end{equation}
If we suppose additionally that $|q| < p$, we are exactly under the hypotheses considered in the main result of \cite{nakad2015eigenvalue}. As it can be seen, the lower bound obtained above is slightly weaker than \eqref{eqlb}, in other words, the lower bound in Eq. (\ref{lbcombinatoric}) is not sharp.

\subsubsection*{Acknowledgments.} E. M. Correa is supported by S\~{a}o Paulo Research Foundation FAPESP grant 25/18843-1.

\section{Spin$^c$-structures and Dirac Operators}
In this section, we introduce the basic concepts of ${\rm{Spin}}^{c}$ structures required to prove our main results. For more details concerning the topics discussed in this section, we refer the reader to \cite{lawson2016spin}, \cite{friedrich2000dirac}, \cite{bourguignon2015spinorial}. 
\subsection{Generalities on Spin$^c$-structures} A smooth Riemannian manifold $(M,g)$ admits a (real) spin structure if it is orientable and its second Stiefel-Whitney class vanishes, i.e., $w_2(TM) = 0$. In that case, the set of isomorphism classes of spin structures is in one-to-one correspondence with $H^1(M; \mathbbm{Z}_2)$. Thus, the obstruction to the existence of a spin structure is purely topological \cite{lawson2016spin}.

Before we define what a ${\rm{Spin}}^{c}$ structure is, we need to define the ${\rm{Spin}}^{c}(n)$ group. Let ${\rm{Spin}}(n)$  be the spin group and ${\rm{U}}(1)$ the group of unitary complex numbers. Since ${\rm{Spin}}(n) \cap {\rm{U}}(1) = \lbrace 1, - 1 \rbrace$ we can define the Spin$^{c}(n)$ group as follows 
\begin{equation*}
    {\rm{Spin}}^{c}(n)=({\rm{Spin}}(n) \times {\rm{U}}(1)) / \lbrace \pm 1 \rbrace.
\end{equation*}
From this, we say that $M$ admits a ${\rm{Spin}}^{c}$ structure if there exists a principal $\text{Spin}^{c}(n)$-bundle $P_{{\rm{Spin}}^{c}(n)}M$ and a bundle map
$
\Lambda : P_{{\rm{Spin}}^{c}(n)}M \longrightarrow P_{{\rm{SO}}(n)}M \times P_{\text{U}(1)}M,
$
that is equivariant with respect to the natural homomorphism $\lambda^c: {\rm{Spin}}^{c}(n) \to {\rm{SO}}(n) \times \text{U}(1)$. More precisely, the following diagram commutes for every $\tilde{u} \in P_{{\rm{Spin}}^{c}(n)}M$ and every $a \in {\rm{Spin}}^{c}(n)$:
\begin{center}
    \begin{tikzcd}[scale=0.7, font=\scriptsize, row sep=scriptsize, column sep=scriptsize]
{\rm{Spin}}^{c}(n) \arrow[d, "a \mapsto \Tilde{u}a" swap] \arrow[rr,"\lambda^{c}"]  &  & {\rm{SO}}(n) \times {\rm{U}}(1)\arrow[d, "{(A,z) \mapsto \Lambda(\Tilde{u})(A,z)}"]  
   \\ 
P_{{\rm{Spin}}^{c}(n)}M   \arrow[rd] \arrow[rr, "\Lambda"]       & & \arrow[ld] P_{{\rm{SO}}(n)}M \times P_{{\rm{U}}(1)}M    \\
 & M  & \
\end{tikzcd}
\end{center}
Here we observe that $P_{\text{U}(1)}M$ is the principal $\text{U}(1)$-bundle associated to an auxiliary complex line bundle ${\bf{L}}$, i.e.
\begin{equation}
{\bf{L}} = P_{\text{U}(1)}M \times_{{\rm{U}}(1)}\mathbbm{C},
\end{equation}
and $P_{{\rm{SO}}(n)}M \times P_{\text{U}(1)}M$ denotes the Whitney product bundle \cite{bourguignon2015spinorial}. 

Each ${\rm{Spin}}^{c}$ structure on a Riemannian manifold $M$ induces a complex spinor bundle
\begin{equation}
\mathcal{S}(M)=P_{{\rm{Spin}}^{c}(n)}M\times_{\rho}\Delta_n,
\end{equation}
where $\Delta_n$ is a complex $\mathbbm{C}l_n$-module and $\rho$ denotes the spin representation of
${\rm{Spin}}^c(n)$. The structure of $\Delta_n$ is well known (see \cite{friedrich2000dirac}):
if $n=2m$, then $\Delta_n\simeq\mathbbm{C}^{2^m}$, while if $n=2m+1$, one has
$\Delta_n\simeq\mathbbm{C}^{2^m}\oplus\mathbbm{C}^{2^m}$.
In what follows, we shall focus on the even-dimensional case $n=2m$, for which, at each
point $x\in M$, the spinor space admits the tensor product decomposition
\begin{equation}
\mathcal{S}(M)_x\simeq\underbrace{\mathbbm{C}^2\otimes\cdots\otimes\mathbbm{C}^2}_{m\text{ times}}.
\end{equation}

This vector bundle admits a Hermitian inner product, which we denote by $( \cdot, \cdot )$. In order to introduce a connection in $\mathcal{S}(M)$ that respects the Clifford product, it is necessary to define the following bundle,
\begin{equation}
\mathbbm{C}l(M) = P_{{\rm{SO}}(n)} M \times_{\mu} \mathbbm{C}l_n,
\end{equation}
here $\mu$ denotes the natural action of ${\rm{SO}}(n)$ on $\mathbbm{C}l_{n}$, and $\mathcal{S}(M)$ becomes a $\mathbbm{C}l(M)$-module. This bundle is known as the Clifford bundle.

Using characteristic classes, we can formulate the following result \cite{lawson2016spin, friedrich2000dirac} which characterizes the existence of ${\rm{Spin}}^{c}$ structures.
\begin{proposition}
\label{prop2}
    Let $(M,g)$ be an oriented Riemannian manifold. Then we obtain the following equivalence 
    \begin{enumerate}
        \item $M$ admits a $\rm{Spin}^{c}$-structure.
        \item There exists a line bundle ${\bf{L}}$ over $M$ such that $w_2(TM) \equiv c_{1}({\bf{L}}) \pmod{2}$.
    \end{enumerate}
    where $w_{2}(TM)$ is the second Stiefel-Whitney class of the tangent bundle, and $c_{1}({\bf{L}}) \in H^2(M; \mathbbm{Z})$ denotes the first Chern class of ${\bf{L}}$.
\end{proposition}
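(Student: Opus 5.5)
Both implications reduce to standard algebraic topology of $\mathrm{Spin}^c(n)$ and of $\mathrm{SO}(n)$-bundles. The key tool is the short exact sequence of groups
\begin{equation*}
1 \to \mathbbm{Z}_2 \to {\rm{Spin}}^{c}(n) \xrightarrow{\lambda^{c}} {\rm{SO}}(n)\times {\rm{U}}(1) \to 1,
\end{equation*}
where $\lambda^{c}([g,z]) = (\lambda(g), z^{2})$ and $\lambda\colon{\rm{Spin}}(n)\to{\rm{SO}}(n)$ is the usual double cover. Throughout, take $n\geq 3$ so that ${\rm{Spin}}(n)$ is the universal cover of ${\rm{SO}}(n)$; low dimensions are handled separately by direct inspection.

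For $(1)\Rightarrow(2)$, suppose $\Lambda\colon P_{{\rm{Spin}}^c(n)}M \to P_{{\rm{SO}}(n)}M\times P_{{\rm{U}}(1)}M$ is given and let ${\bf L}$ be the line bundle associated to $P_{{\rm{U}}(1)}M$. The Čech transition functions of the frame bundle are $g_{ij}\colon U_{ij}\to{\rm{SO}}(n)$, and those of $P_{{\rm{U}}(1)}M$ are $z_{ij}$. On a good cover, lift these to ${\rm{Spin}}^c$-valued transition functions $[\tilde g_{ij},\tilde z_{ij}]$ satisfying the cocycle condition. Here $\tilde g_{ij}\in{\rm{Spin}}(n)$ lifts $g_{ij}$ and $\tilde z_{ij}^{2}=z_{ij}$. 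The cocycle condition in ${\rm{Spin}}^c(n)$ says that the defect $\varepsilon_{ijk}=\tilde g_{ij}\tilde g_{jk}\tilde g_{ki}\in\{\pm1\}$ equals $(\tilde z_{ij}\tilde z_{jk}\tilde z_{ki})^{-1}$. The class of $\varepsilon$ in $H^2(M;\mathbbm{Z}_2)$ is $w_2(TM)$, the obstruction to lifting $g_{ij}$ to ${\rm{Spin}}(n)$. The class of $\tilde z_{ij}\tilde z_{jk}\tilde z_{ki}$ is the obstruction to extracting a square root of ${\bf L}$. Via the exponential sequence and the Bockstein for $0\to\mathbbm{Z}\xrightarrow{2}\mathbbm{Z}\to\mathbbm{Z}_2\to 0$, this class is the mod $2$ reduction of $c_1({\bf L})$. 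Equating the two gives $w_2(TM)\equiv c_1({\bf L}) \pmod 2$.

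For $(2)\Rightarrow(1)$, reverse the construction. Choose transition functions $g_{ij}$ of $P_{{\rm{SO}}(n)}M$ and $z_{ij}$ of the ${\rm{U}}(1)$-bundle of ${\bf L}$ on a good cover, and pick arbitrary local lifts $\tilde g_{ij}$ and square roots $\tilde z_{ij}$. Contractibility of the $U_{ij}$ makes both choices possible. The defects $\varepsilon_{ijk}$ and $\eta_{ijk}=\tilde z_{ij}\tilde z_{jk}\tilde z_{ki}$ are $\mathbbm{Z}_2$-valued Čech $2$-cocycles representing $w_2(TM)$ and $c_1({\bf L})\bmod 2$ respectively. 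By hypothesis, $\varepsilon\eta = \delta\sigma$ for some $\mathbbm{Z}_2$-valued $1$-cochain $\sigma$. Replacing $\tilde z_{ij}$ by $\sigma_{ij}\tilde z_{ij}$ makes $\varepsilon_{ijk}\eta_{ijk}=1$. This is precisely the condition that $[\tilde g_{ij},\tilde z_{ij}]$ is a ${\rm{Spin}}^c(n)$ cocycle, since $-1$ is identified in the quotient. The resulting principal bundle maps equivariantly to $P_{{\rm{SO}}(n)}M\times P_{{\rm{U}}(1)}M$ via $\lambda^c$, which gives the required $\Lambda$.

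The main obstacle is the identification of the square-root obstruction $[\eta]$ with $c_1({\bf L})\bmod 2$. This is a diagram chase combining the exponential sequence $0\to\mathbbm{Z}\to\underline{\mathbbm{R}}\to\underline{{\rm{U}}(1)}\to 0$, which gives $H^1(M;\underline{{\rm{U}}(1)})\cong H^2(M;\mathbbm{Z})$, with the squaring map on ${\rm{U}}(1)$, whose kernel is $\mathbbm{Z}_2$. The connecting maps commute with reduction mod $2$, and I would verify this directly on cocycles. Equally, the identification $[\varepsilon]=w_2(TM)$ is the standard definition of $w_2$ via ${\rm{Spin}}(n)\to{\rm{SO}}(n)$, which I would take from \cite{lawson2016spin}.
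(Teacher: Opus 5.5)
Your proof is correct: it is the standard Čech-cocycle argument through the central extension $1\to\mathbbm{Z}_2\to{\rm Spin}^c(n)\to{\rm SO}(n)\times{\rm U}(1)\to1$, which is the usual proof in the references \cite{lawson2016spin}, \cite{friedrich2000dirac} that the paper cites in place of giving its own proof. The step you flag as the main obstacle is really a one-line check: write $z_{ij}=e^{2\pi\sqrt{-1}f_{ij}}$ and take $\tilde z_{ij}=e^{\pi\sqrt{-1}f_{ij}}$, so that $\eta_{ijk}=(-1)^{f_{ij}+f_{jk}+f_{ki}}$ is literally the mod $2$ reduction of an integral cocycle representing $c_1({\bf L})$; also, the restriction $n\geq 3$ is unnecessary, since you only use that ${\rm Spin}(n)\to{\rm SO}(n)$ is the double cover whose lifting obstruction is $w_2$.
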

\begin{remark}
Every ${\rm{Spin}}$ manifold is automatically a ${\rm{Spin}}^{c}$ manifold, but the converse is not true. For example, $\mathbbm{C}P^n$ admits a ${\rm{Spin}}^{c}$ structure for all $n$, but it admits a ${\rm{Spin}}$ structure only when $n$ is odd, see for instance \cite{hitchin1974harmonic}.
\end{remark}

In general, every Hermitian manifold $(M^{2m}, g, J)$ admits a canonical ${\rm{Spin}}^{c}$ structure. In fact, by choosing the auxiliary line bundle ${\bf{L}} = {\bf{K}}_{M}^{-1}$ (the anticanonical bundle), since 
\begin{center}
$w_2(TM) \equiv c_{1}(M) \pmod{2}$, 
\end{center}
and $c_{1}(M) = c_{1}({\bf{K}}_{M}^{-1})$, it follows from Proposition  \ref{prop2} that there exists a $\rm{Spin}^{c}$-structure on $(M^{2m}, g, J)$ with auxiliary line bundle $ {\bf{K}}_{M}^{-1}$. In this case, we have the following description for the associated complex spinor bundle
\begin{equation}
\mathcal{S}(M)_{\text{can.}} \cong \textstyle{\bigwedge^{0,\ast}T^{\ast}M}.
\end{equation}
From above we can show that every ${\rm{Spin}}^{c}$ structure on $(M^{2m}, g, J)$ is obtained by twisting the above one by the line bundle ${\bf{E}} \in {\rm{Pic}}(M)$ satisfying 
\begin{equation}
{\bf{E}}^{2}:= {\bf{E}} \otimes {\bf{E}} = {\bf{L}} \otimes {\bf{K}}_{M}.
\end{equation}
In the above case, the associated complex spinor bundle satisfies the following 
\begin{equation}
\mathcal{S}(M) \cong \mathcal{S}(M)_{\text{can.}} \otimes {\bf{E}},
\end{equation}
in other words, a ${\rm{Spin}}^{c}$ structure on $(M^{2m}, g, J)$ is completely determined by its auxiliary line bundle ${\bf{L}}$, for more details, see for instance \cite{friedrich2000dirac}. 

Given a Hermitian manifold $(M^{2m}, g, J)$, such that $H^{2}(M,\mathbbm{Z})$ has no 2-torsion, we shall denote by
\begin{equation}
{\rm{Spin}}^{c}(M): = \big \{ {\bf{L}} \in {\rm{Pic}}(M) \ | \ w_2(TM) \equiv c_{1}({\bf{L}}) \pmod{2} \big \},
\end{equation}
the set of isomorphism classes of ${\rm{Spin}}^{c}$ structures on $(M^{2m}, g, J)$.
\subsection{Spin$^{c}$ Dirac operator and Weitzenböck remainder}

Let $(M,g)$ be an oriented Riemannian manifold equipped with a ${\rm{Spin}}^{c}$ structure. In this setting, let us denote by ${\bf{L}}$ the associated auxiliary line bundle.

A natural connection $\nabla^{A}$ on $\mathcal{S}(M)$ is induced by the Levi-Civita connection $\nabla^M$ on $M$ and  a connection $\nabla = d + A$ on the auxiliary line bundle ${\bf{L}}$. This connection satisfies the Leibniz rule with respect to the Clifford action:
\begin{equation}
\label{compc}
    \nabla^{A}_Y(\sigma \cdot \psi) = (\nabla^{A}_Y \sigma) \cdot \psi + \sigma \cdot (\nabla^{A}_Y \psi),
\end{equation}
for all $Y \in \Gamma(TM)$, $\sigma \in \Gamma(\mathbbm{C}l(M))$, and $\psi \in \Gamma(\mathcal{S}(M))$. Moreover, $\nabla^{A}$ is compatible with the Hermitian inner product on $\mathcal{S}(M)$.

There is a local description of this connection. Consider a local section $\tilde{u} \in \Gamma(P_{\operatorname{Spin}^c(n)}(M))$ such that $\Lambda(\tilde{u}) = (u, \gamma)$, where $u = (e_1, \dots, e_n)$ is a local orthonormal frame of $TM$ and $\gamma \in \Gamma(L)$ is a local section. A local spinor field can be expressed as $\Psi = [\tilde{u}, \psi]$, where $\psi$ is a function with values in $\Delta_n$. Then the connection acts locally as:
\begin{equation}
\nabla^{A}_Y \Psi = \left[ \tilde{u},\ Y(\psi) + \frac{1}{2} \sum_{i < j} g(\nabla_Y e_i, e_j) \, e_i \cdot e_j \cdot \psi + \sqrt{-1} \, A(Y) \cdot \psi \right],
\end{equation}
$\forall Y \in \Gamma(TM)$, $\cdot$ denotes Clifford multiplication, and $A$ is the local connection 1-form. 

From this, we can define  the Spin$^c$ Dirac operator, $\mathcal{D}^{A}: \Gamma^{\infty}(\mathcal{S}(M)) \rightarrow \Gamma^{\infty}(\mathcal{S}(M))$ by 
\begin{equation}
\mathcal{D}_{A}\psi = \sum_{i=1}^{n}e_{i}\cdot \nabla^{A}_{e_{i}}\psi. 
\end{equation}
In the above setting, we have the following result. 

\begin{proposition}
\label{l2product}
    $\mathcal{D}_{A}$ is a self-adjoint operator with respect to the scalar product
    \begin{equation}
    \langle \psi,\varphi \rangle:= \int_{M}( \psi, \varphi)d\mathrm{vol_{g}},
    \end{equation}  
where $\psi,\varphi \in \Gamma^{\infty}(\mathcal{S}(M))$ are compactly supported sections, i.e., $\langle \mathcal{D}_{A}\psi,\varphi\rangle= \langle \psi,\mathcal{D}_{A}\varphi \rangle$.
\end{proposition}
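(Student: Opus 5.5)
The plan is to check directly that $\mathcal{D}_{A}$ is symmetric on compactly supported sections. The method is the standard divergence argument. Fix $\psi,\varphi \in \Gamma^{\infty}(\mathcal{S}(M))$ with compact support and work pointwise near a point $x\in M$. I would choose a local orthonormal frame $(e_1,\dots,e_n)$ that is synchronous at $x$, i.e.\ $(\nabla^{M}e_i)_x = 0$. Since the final identity is tensorial, it suffices to verify it at $x$ in this frame.

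The key pointwise identity is
\begin{equation*}
(\mathcal{D}_{A}\psi,\varphi) - (\psi,\mathcal{D}_{A}\varphi) = \operatorname{div}(V),
\end{equation*}
where $V$ is the vector field defined by $g(V,Y) = (Y\cdot\psi,\varphi)$. To prove it, I will use the following facts:
\begin{itemize}
\item Clifford multiplication by a unit vector is skew-Hermitian for $(\cdot,\cdot)$, i.e.\ $(e_i\cdot\psi,\varphi) = -(\psi,e_i\cdot\varphi)$. This holds because the ${\rm{Spin}}^{c}(n)$-invariant Hermitian product on $\Delta_n$ makes $e_i$ act unitarily with $e_i^2=-1$.
\item $\nabla^{A}$ is metric, as stated above.
\item The Leibniz rule (\ref{compc}) holds, so that $\nabla^{A}_{e_i}(e_i\cdot\psi) = e_i\cdot\nabla^{A}_{e_i}\psi$ at $x$.
\end{itemize}
I also need that the $U(1)$ part $\sqrt{-1}A$ of the connection is skew-Hermitian, since $A$ is real. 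This is exactly what ensures metric compatibility of the twisted connection.

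With these facts the computation at $x$ runs as follows:
\begin{align*}
(\mathcal{D}_{A}\psi,\varphi) &= \sum_i (e_i\cdot\nabla^{A}_{e_i}\psi,\varphi) = -\sum_i(\nabla^{A}_{e_i}\psi, e_i\cdot\varphi)\\
&= -\sum_i e_i(\psi,e_i\cdot\varphi) + \sum_i(\psi,\nabla^{A}_{e_i}(e_i\cdot\varphi)) \\
&= \sum_i e_i\big(g(V,e_i)\big) + (\psi,\mathcal{D}_{A}\varphi).
\end{align*}
Because the frame is synchronous at $x$, the sum $\sum_i e_i(g(V,e_i))$ equals $\operatorname{div}V$ at $x$. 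Here $V$ is defined using $(Y\cdot\psi,\varphi) = -(\psi,Y\cdot\varphi)$, so that the signs match.

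Finally, $V$ has compact support, so the divergence theorem gives $\int_M \operatorname{div}V\, d\mathrm{vol}_g = 0$. This yields $\langle \mathcal{D}_{A}\psi,\varphi\rangle = \langle\psi,\mathcal{D}_{A}\varphi\rangle$.

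The only delicate step is justifying that $V$ is a globally well-defined (complex) vector field, independent of the frame and of the choice of the local lift $\tilde u$. It is well defined because $Y\mapsto (Y\cdot\psi,\varphi)$ is a $C^\infty(M)$-linear complex-valued $1$-form. I would check its compatibility with the divergence formula by taking real and imaginary parts separately. Everything else is routine bookkeeping of signs.
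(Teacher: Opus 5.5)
The paper gives no proof of this proposition; it quotes it as standard from the references cited at the start of Section 2 (Lawson--Michelsohn, Friedrich), and the proof there is exactly your argument: a synchronous frame, the vector field defined by $g(V,Y)=(Y\cdot\psi,\varphi)$, and the divergence theorem. Your proof is correct.

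The one point to fix is the justification of the skew-adjointness of Clifford multiplication. It comes from choosing the Hermitian product so that unit vectors act unitarily (e.g.\ $\mathrm{Pin}(n)$-invariance). ${\rm{Spin}}^{c}(n)$-invariance alone is not enough: for even $n$ it would allow $\Delta_n^{+}$ and $\Delta_n^{-}$ to be rescaled independently, and then $\mathcal{D}_{A}$ would no longer be symmetric.
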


The next result provides a fundamental relation between the ${\rm{Spin}}^{c}$ Dirac operator and the geometry of the manifold.

\begin{theorem}[Schr\"{o}dinger-Lichnerowicz formula]
By keeping the previous notation, it follows that
\begin{equation}
\label{maineq}
    {\mathcal{D}_{A}}^{2} = {\nabla^A}^* \nabla^{A}+ \frac{1}{4}S_{g}1_{\mathcal{S}(M)} + \frac{1}{2} F_{A},
\end{equation}
where $S_{g}$ is the scalar curvature of $(M,g)$ and $F_{A} = dA$.
\end{theorem}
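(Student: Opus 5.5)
We prove the Schrödinger–Lichnerowicz formula by a local computation in a synchronous frame, splitting $\mathcal{D}_{A}^{2}$ into a Laplacian part and a curvature part. Fix $x \in M$ and a local orthonormal frame $(e_1,\dots,e_n)$ with $(\nabla^M e_i)_x = 0$. At $x$ we write
\begin{equation*}
\mathcal{D}_{A}^{2}\psi = \sum_{i,j} e_i \cdot \nabla^{A}_{e_i}\big(e_j \cdot \nabla^{A}_{e_j}\psi\big) = \sum_{i,j} e_i\cdot e_j \cdot \nabla^{A}_{e_i}\nabla^{A}_{e_j}\psi .
\end{equation*}
The second equality uses the Leibniz rule (\ref{compc}) together with $\nabla e_j = 0$ at $x$. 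We then split the double sum into $i=j$ and $i\neq j$. Since $e_i\cdot e_i = -1$, the diagonal terms give $-\sum_i \nabla^{A}_{e_i}\nabla^{A}_{e_i}\psi$, which equals ${\nabla^{A}}^{*}\nabla^{A}\psi$ at $x$ because the frame is synchronous. The off-diagonal terms antisymmetrize, using $e_ie_j=-e_je_i$, to
\begin{equation*}
\frac12\sum_{i\neq j} e_i\cdot e_j\cdot R^{A}(e_i,e_j)\psi ,
\end{equation*}
where $R^{A}$ is the curvature of $\nabla^{A}$. The Lie bracket term $[e_i,e_j]$ drops out because it vanishes at $x$.

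Next we decompose the curvature. From the local formula for $\nabla^{A}$, the connection is the tensor product of the spin lift of the Levi-Civita connection with the connection on the twisting factor. Hence
\begin{equation*}
R^{A}(X,Y) = \frac14\sum_{k,l} g(R(X,Y)e_k,e_l)\, e_k\cdot e_l + \frac{\sqrt{-1}}{2}F_A(X,Y),
\end{equation*}
or with whatever normalization of $A$ is in force (the $\tfrac12$ reflects that spinors couple to the square root of ${\bf{L}}$). We substitute this into the off-diagonal sum and treat the two pieces separately. The Riemannian piece gives the classical Lichnerowicz term $\tfrac14 S_g$. To see this, we first apply the first Bianchi identity, which kills the totally antisymmetric part of $\sum_{i,j,k} e_ie_je_k R_{ijkl}$, and then contract with the Ricci tensor. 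This part is identical to the spin case, and we will cite it from \cite{lawson2016spin} or \cite{friedrich2000dirac} rather than redo it. The line-bundle piece is $\frac12\sum_{i<j}(\ldots)F_A(e_i,e_j)\, e_i\cdot e_j$, which is precisely Clifford multiplication by the $2$-form $F_A = dA$, up to the factor $\tfrac12$ and the sign/$\sqrt{-1}$ conventions.

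The main obstacle is bookkeeping of constants rather than any conceptual step. We must track the factor $\tfrac12$ from the square-root coupling, the $\sqrt{-1}$ in the local connection formula, and the convention identifying a $2$-form with an element of $\mathbbm{C}l(M)$. These must come out so that the term appears exactly as $\tfrac12 F_A$ acting by Clifford multiplication. We will fix this by checking the formula against the canonical structure on a flat torus, or by comparing with the convention in \cite{friedrich2000dirac}. Since the result is a pointwise identity and $x$ was arbitrary, this yields (\ref{maineq}).
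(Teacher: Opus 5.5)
Your argument is correct, and it is the standard synchronous-frame derivation. The paper itself gives no proof of this formula. It quotes it as a known result, implicitly from \cite{friedrich2000dirac} and \cite{lawson2016spin}, where essentially your computation appears, so there is no different route to compare against. The diagonal and off-diagonal splitting and the vanishing of $[e_i,e_j]$ at $x$ are handled correctly. Citing the Riemannian part $\frac12\sum_{i\neq j}e_i\cdot e_j\cdot R^{\mathrm{spin}}(e_i,e_j)=\frac14 S_g$ is legitimate, since it is a pointwise Clifford identity depending only on the Riemann tensor.

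The one weak spot is how you propose to fix the constants. The flat-torus check is vacuous: for the canonical structure there, both $S_g$ and $F_A$ vanish, so it tests nothing. No test case is needed, though. Suppose the twisting term in the local formula for $\nabla^{A}$ is $c\,A(Y)$ with $A$ scalar-valued. Then $R^{A}$ picks up exactly $c\,dA(X,Y)$, with no $A\wedge A$ term. Your off-diagonal sum turns this into $\frac12\sum_{i\neq j}c\,dA(e_i,e_j)\,e_i\cdot e_j=c\,dA\cdot$, so the coefficient of $F_A$ can be read off directly from the connection. The stated $\frac12 F_A$ corresponds to the convention in which the spinor connection contains $\frac12 A$ with $A$ a $\sqrt{-1}\mathbbm{R}$-valued connection form, as in \cite{friedrich2000dirac}. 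That convention is also consistent with the paper's later use of $\theta=\frac{\sqrt{-1}}{2\pi}F_A$ as a real form. By contrast, the local formula displayed in the paper, with $\sqrt{-1}A(Y)$, would literally produce $\sqrt{-1}\,dA\cdot$. So the paper's two normalizations must be reconciled by rescaling $A$, which is exactly the bookkeeping you anticipated.
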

In the above theorem we consider the curvature $F_A$ of $A$ acting via Clifford multiplication extended to differential forms \cite{friedrich2000dirac}. 

In this paper, we also consider the following operator.

\begin{definition}
The Weitzenböck remainder of a ${\rm{Spin}}^{c}$ Dirac operator $\mathcal{D}^{A}$ is defined by the operator
\begin{equation}
{\mathcal{D}_{A}}^{2} - {\bf{\Delta}}_{A}  \colon \Gamma^{\infty}(\mathcal{S}(M)) \rightarrow \Gamma^{\infty}(\mathcal{S}(M)).
\end{equation}
such that ${\bf{\Delta}}_{A}  = {\nabla^A}^* \nabla^{A}$ is the Bochner-Laplacian on $\mathcal{S}(M)$.
\end{definition}

As we see from the Schr\"{o}dinger-Lichnerowicz formula, in order to study the spectrum of the ${\mathcal{D}_{A}}^{2} - {\bf{\Delta}}_{A} $, we need to deal with the operator $F_{A} \colon \Gamma^{\infty}(\mathcal{S}(M)) \rightarrow \Gamma^{\infty}(\mathcal{S}(M))$. We shall explore this action in the particular case that $(M^{2m}, g, J)$ is Kähler manifold. In this particular setting, choosing $A$ as Chern connection, it follows that $\theta = \frac{\sqrt{-1}F_{A}}{2\pi}$ is a $(1,1)$-form on $M$. For this particular class of $2$-forms on a Kähler manifold $(M^{2m}, g, J)$, we have the following standard result.

\begin{proposition}
\label{eigenvalue}
    Let $(M^{2m}, g, J)$ be a Kähler manifold of complex dimension $m$, with Kähler form $\omega = g(J \otimes {\rm{id}})$, and let $\theta$ be a real $(1,1)$-form on $M$. For any point $p \in M$, there exists a local holomorphic coordinate system $\{w_1, \dots, w_m\}$ centered at $p$ (i.e., $w_j(p) = 0$), such that, at the point $p$, we have
    \begin{equation}
    \label{eigenv}
      \theta_p = \frac{\sqrt{-1}}{2} \sum_{j=1}^n \lambda_j(p) \, dw_j \wedge d\bar{w}_j|_{p} \ \ \ \text{and} \ \ \ \omega_{p} = \frac{\sqrt{-1}}{2} \sum_{j=1}^n \, dw_j \wedge d\bar{w}_{j}|_{p},
    \end{equation}
where $\lambda_{1}(p), \dots, \lambda_{n}(p) \in \mathbbm{R}$ are the eigenvalues of the endomorphism $\omega^{-1} \circ \theta$ at $p \in M$.
\end{proposition}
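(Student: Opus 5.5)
The plan is to reduce the statement to simultaneous diagonalization of two Hermitian forms at a single point. At $p$, in $T_{p}M$ with complex structure $J_{p}$, the Kähler form $\omega_{p}$ determines a positive definite Hermitian form $h(u,v)=g_{p}(u,v)-\sqrt{-1}\,\omega_{p}(u,v)$. The real $(1,1)$-form $\theta_{p}$ determines the $J$-invariant real symmetric bilinear form $b(u,v)=\theta_{p}(u,Jv)$, and hence the Hermitian form $k(u,v)=b(u,v)-\sqrt{-1}\,\theta_{p}(u,v)$. Here $J$-invariance of $b$ and symmetry come from $\theta$ being a real $(1,1)$-form.

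First, I would apply the spectral theorem to the $h$-self-adjoint endomorphism $T=\omega_{p}^{-1}\circ\theta_{p}$. Equivalently, $T$ is defined by $k(u,v)=h(Tu,v)$, and it commutes with $J$. This gives an $h$-unitary basis $v_{1},\dots,v_{m}$ of $(T_{p}M,J_{p})$ with $Tv_{j}=\lambda_{j}v_{j}$ and $\lambda_{j}\in\mathbbm{R}$. Let $\zeta_{1},\dots,\zeta_{m}$ be the dual $(1,0)$-covectors, extended to linear coordinates on $T_{p}M$. In these covectors one reads off
\[
\omega_{p}=\tfrac{\sqrt{-1}}{2}\sum_{j}\zeta_{j}\wedge\bar\zeta_{j}, \qquad \theta_{p}=\tfrac{\sqrt{-1}}{2}\sum_{j}\lambda_{j}\,\zeta_{j}\wedge\bar\zeta_{j}.
\]
This normalization is consistent with the convention $\omega=\frac{\sqrt{-1}}{2}\sum dw\wedge d\bar w$ for the standard metric on $\mathbbm{C}^{m}$.

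Second, I would transport this to holomorphic coordinates. Take any holomorphic chart $z=(z_{1},\dots,z_{m})$ centered at $p$. The covectors $dz_{k}|_{p}$ and $\zeta_{j}$ are both bases of $T^{1,0*}_{p}M$, so $\zeta_{j}=\sum_{k}a_{jk}\,dz_{k}|_{p}$ for some invertible complex matrix $(a_{jk})$. Define $w_{j}=\sum_{k}a_{jk}z_{k}$. This is a complex-linear change of holomorphic coordinates, so the $w_{j}$ are holomorphic, centered at $p$, and satisfy $dw_{j}|_{p}=\zeta_{j}$. The two identities in \eqref{eigenv} then hold at $p$. The eigenvalues $\lambda_{j}(p)$ are intrinsic, being those of $\omega^{-1}\circ\theta$. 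The Kähler condition is not needed for this pointwise statement; it would matter only for a normalization up to first order.

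The one real obstacle is bookkeeping: making sure the sign and factor-of-$2$ conventions relating $h$, $\omega$ and $\theta$ give exactly the factor $\frac{\sqrt{-1}}{2}$. I would check this on $\mathbbm{C}$ with the Euclidean metric, where $\frac{\sqrt{-1}}{2}\,dz\wedge d\bar z=dx\wedge dy$. Finally, the sum in \eqref{eigenv} should run over $j=1,\dots,m$; the $n$ in the statement should be read as $m$.
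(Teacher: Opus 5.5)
Your proof is correct. With the paper's convention $\omega(u,v)=g(Ju,v)$, the form $h=g-\sqrt{-1}\,\omega$ is $\mathbbm{C}$-linear in the first slot, and $k(u,v)=h(Tu,v)$ holds exactly when $\theta(u,v)=\omega(Tu,v)$. From $h(u,v)=\sum_j\zeta_j(u)\overline{\zeta_j(v)}$ you get $\omega_p=-\mathrm{Im}\,h=\frac{\sqrt{-1}}{2}\sum_j\zeta_j\wedge\bar\zeta_j$, and likewise $\theta_p=-\mathrm{Im}\,k=\frac{\sqrt{-1}}{2}\sum_j\lambda_j\,\zeta_j\wedge\bar\zeta_j$. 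So the normalization you flagged comes out right. You are also right that the K\"{a}hler condition plays no role here and that the upper summation index should be $m$.

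The paper gives no proof of this proposition; it is quoted as a standard result, so there is nothing to match line by line. The nearest analogue is the proof of Proposition~\ref{eigenvalueatorigin} together with Remark~\ref{normalizationform}. It has the same two-step shape in the homogeneous case: diagonalize at the base point, then make a linear change of holomorphic coordinates. The difference is that it gets the diagonalizing basis from symmetry rather than from the spectral theorem. The vectors $\partial_{w_\beta}|_{o}$ are weight vectors of the distinct weights $-\beta$ for the isotropy torus, so every invariant Hermitian form is automatically diagonal in them. The rescaling $z_\beta=\sqrt{\langle\phi([\omega]),\beta^\vee\rangle/2\pi}\,w_\beta$ then normalizes $\omega$.

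Your argument applies to any real $(1,1)$-form at any point of any Hermitian manifold, but it says nothing about the $\lambda_j$. The paper's argument needs $G$-invariance, but in exchange it identifies the eigenvectors and eigenvalues explicitly in root-theoretic terms.
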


Let $(M,J,g)$ be a Kähler manifold of complex dimension $m$. In the above setting, we can choose a local orthonormal frame $\{e_1,J(e_{1}), \ldots, e_{m}, J(e_{m})\}$ at $p \in M$, such that 
\begin{equation}
\theta =  \sum_{j=1}^{m} \lambda_{j}(p)e_{j} \wedge J(e_{j}) \ \ \ \text{and} \ \ \ \omega = \sum_{j=1}^{m} e_{j} \wedge J(e_{j}).
\end{equation}
Now consider a ${\rm{Spin}}^{c}$ structure on $(M^{2m}, J, g)$, with associated spinor bundle $\mathcal{S}(M)$, since the above frame is orthonormal, we may identify $e_{j} \wedge J(e_{j})$ with the Clifford product $e_{j} \cdot J(e_{j})$, so that $\theta$ acts on $\mathcal{S}(M)_{p}$ by
\begin{equation}
\theta\cdot \psi= \sum_{j=1}^{m} \lambda_{j}(p)  e_{j}\cdot J(e_{j})  \cdot \psi.
\end{equation}

By considering the Spin$^{c}$ representation $\rho$, as mentioned in the previous section, we have the operators $\rho(e_{j}\cdot J(e_{j}))$, $1\le j\le m$, commute and are diagonalizable. Then the spinor space $\mathcal S(M)_{p}$ decomposes as a direct sum of common eigenspaces indexed by
\begin{equation}
\epsilon=(\epsilon_1,\dots,\epsilon_m)\in\{\pm1\}^m, 
\end{equation}
i.e, it has a basis of $2^{m}$ spinors $\psi(\epsilon_{1},\ldots,\epsilon_{m})$, such that
\begin{equation}
e_{j}\cdot J(e_{j})\,\psi(\epsilon_{1},\ldots,\epsilon_{m})
=
\epsilon_j\,\sqrt{-1}\,\psi(\epsilon_1,\dots,\epsilon_m),
\end{equation}
where $\epsilon_j\in\{\pm1\}$, for more details, see \cite[Page 7]{hitchin1974harmonic}. In particular, considering $\theta = \frac{\sqrt{-1}}{2\pi}F_{A}$, we conclude that 
\begin{equation}
{\mathcal{D}_{A}}^{2} - {\bf{\Delta}}_{A} = \frac{1}{4}S_{g}(p)1_{\mathcal{S}(M)} + \frac{\pi}{\sqrt{-1}}\sum_{j=1}^{m} \lambda_{j}(p)  e_{j}\cdot J(e_{j}),
\end{equation}
at $p \in M$. As we shall see, the above expression allows us to describe the spectrum of ${\mathcal{D}_{A}}^{2} - {\bf{\Delta}}_{A}$ in the homogeneous setting.

\subsection{Atiyah-Singer index theorem}

Let $(M,g)$ be an oriented Riemannian manifold equipped with a ${\rm{Spin}}^{c}$ structure with auxiliary line bundle ${\bf{L}}$. In this case, we have the following fact about the associated complex spinor bundle
\begin{equation}
\begin{cases} \mathcal{S}(M) \ \text{is irreducible if} \ \dim(M) \ \text{is odd},\\
\\
\mathcal{S}(M) = \mathcal{S}(M)^{+} \oplus \mathcal{S}(M)^{-} \ \text{if} \ \dim(M) \ \text{is even}.
\end{cases}
\end{equation}
In particular, if $\dim(M) = 2m$, then the associated ${\rm{Spin}}^{c}$ Dirac operator $\mathcal{D}_{A}$ has the following decomposition
\begin{equation}
\mathcal{D}_{A} = \begin{pmatrix} 0 & \mathcal{D}_{A}^{-} \\
\mathcal{D}_{A}^{+} & 0\end{pmatrix},
\end{equation}
such that $\mathcal{D}_{A}^{\pm} \colon \Gamma(\mathcal{S}(M)^{\pm}) \to \Gamma(\mathcal{S}(M)^{\mp})$. In the above setting, we have the following definition.
\begin{definition}
Let $(M,g)$ be a compact even-dimensional manifold which admits a ${\rm{Spin}}^{c}$ structure. We define the index of the associated ${\rm{Spin}}^{c}$ Dirac operator $\mathcal{D}_{A}$ by 
\begin{equation}
{\rm{Index}}(\mathcal{D}_{A}):= \dim_{\mathbbm{C}}(\ker(D_{A}^{+})) - \dim_{\mathbbm{C}}(\ker(D_{A}^{-})).
\end{equation}
\end{definition}

Now we consider the following important result.

\begin{theorem}[Atiyah-Singer, \cite{AtiyahSinger1968c}]
Let $(M,g)$ be a compact even-dimensional manifold that admits a ${\rm{Spin}}^{c}$ structure with auxiliary line bundle ${\bf{L}}$. Then, the index of the associated ${\rm{Spin}}^{c}$ Dirac operator $\mathcal{D}_{A}$ satisfies
\begin{equation}
\label{indexformula}
{\rm{Index}}({\mathcal{D}}_{A}) = \int_{M}{\rm{e}}^{\frac{c_{1}({\bf{L}})}{2}}{\widehat{A}}(M),
\end{equation}
where ${\widehat{A}}(M)$ is the $\widehat{A}$-class of $TM$ and $c_{1}(\bf{L})$ is the first Chern class of ${\bf{L}}$.
\end{theorem}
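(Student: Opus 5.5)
The plan is to prove Eq. (\ref{indexformula}) by the local index (heat kernel) method, reducing the ${\rm{Spin}}^{c}$ case to a twisted spin Dirac operator. The key observation is that, although $M$ need not be spin, every local computation can be done on a small contractible open set $U \subset M$. On such a set a spin structure exists, and the auxiliary bundle admits a square root ${\bf{L}}|_{U}^{1/2}$. Hence locally
\begin{equation*}
\mathcal{S}(M)|_{U} \cong \Sigma U \otimes {\bf{L}}|_{U}^{1/2},
\end{equation*}
where $\Sigma U$ is the spinor bundle of the local spin structure. Under this identification $\nabla^{A}$ is the tensor product of the spin connection with the connection on ${\bf{L}}^{1/2}$ induced by $A$, whose curvature is $\frac{1}{2}F_{A}$. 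So $\mathcal{D}_{A}$ is locally a Dirac operator twisted by a line bundle. This is consistent with the Schr\"{o}dinger-Lichnerowicz formula (\ref{maineq}), whose curvature term is exactly $\frac{1}{2}F_{A}$.

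\textbf{Step 1 (McKean-Singer).} Since $\mathcal{D}_{A}$ is self-adjoint (Proposition \ref{l2product}), elliptic and odd with respect to $\mathcal{S}(M)=\mathcal{S}(M)^{+}\oplus\mathcal{S}(M)^{-}$, I will show
\begin{equation*}
{\rm{Index}}(\mathcal{D}_{A}) = {\rm{Str}}\big(e^{-t\mathcal{D}_{A}^{2}}\big) = \int_{M}{\rm{str}}\, k_{t}(x,x)\, d{\rm{vol}}_{g}
\end{equation*}
for every $t>0$, where $k_{t}$ is the heat kernel. The argument is the standard one: the nonzero eigenspaces of $\mathcal{D}_{A}^{2}$ pair off between $\mathcal{S}^{+}$ and $\mathcal{S}^{-}$ via $\mathcal{D}_{A}$, so only the kernel contributes to the supertrace.

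\textbf{Step 2 (local limit).} Next I compute $\lim_{t\to 0}{\rm{str}}\,k_{t}(x,x)$ pointwise, using the local twisted description above. I would apply Getzler rescaling in normal coordinates at $x$, using (\ref{maineq}) to write $\mathcal{D}_{A}^{2}$ as a generalized Laplacian. After rescaling, the operator converges to a harmonic-oscillator operator: its quadratic potential comes from the Riemann curvature, and it carries an additional constant term $\frac{1}{2}F_{A}$, which commutes with everything. Mehler's formula then gives the limiting density as the top-degree part of $\det^{1/2}\!\big(\frac{R/2}{\sinh(R/2)}\big)\, e^{-F_{A}/2}$, suitably normalized. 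With the convention $c_{1}({\bf{L}})=[\frac{\sqrt{-1}}{2\pi}F_{A}]$, this is exactly the top form of $\widehat{A}(M)\,e^{c_{1}({\bf{L}})/2}$. The local density makes sense globally even though ${\bf{L}}^{1/2}$ does not exist globally, because it depends only on $R$ and $F_{A}$.

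\textbf{Step 3 (conclusion).} Integrating the local limit over $M$ and using Step 1 gives Eq. (\ref{indexformula}). As a sanity check on the normalization, in the Kähler case with ${\bf{L}}={\bf{K}}_{M}^{-1}$ one has $\mathcal{S}(M)\cong\bigwedge^{0,\ast}T^{\ast}M$ and $\mathcal{D}_{A}=\sqrt{2}(\bar\partial+\bar\partial^{\ast})$. The identity $e^{c_{1}/2}\widehat{A}={\rm{Td}}(M)$ then recovers Riemann-Roch-Hirzebruch.

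The main obstacle is Step 2: controlling the Getzler rescaling and the Mehler formula computation precisely enough to fix constants and signs, in particular the factor $\frac{1}{2}$ in $e^{c_{1}({\bf{L}})/2}$. If that becomes unwieldy, the fallback is to cite the general Atiyah-Singer formula ${\rm{Index}}=\int_{M}{\rm{ch}}(\sigma)\,{\rm{Td}}(TM\otimes\mathbbm{C})$ and evaluate the Chern character of the ${\rm{Spin}}^{c}$ symbol class via the local splitting $\Sigma\otimes{\bf{L}}^{1/2}$.
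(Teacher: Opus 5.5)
Your proposal is correct in outline, but it takes a genuinely different route from the paper. The paper gives no proof of this statement. It quotes the result from Atiyah--Singer, where the ${\rm{Spin}}^{c}$ formula comes out of the topological (K-theoretic/cohomological) index theorem by evaluating ${\rm{ch}}$ of the symbol class of $\mathcal{D}_{A}$ against ${\rm{Td}}(TM\otimes\mathbbm{C})$; that is essentially your fallback.

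Your main route is the heat-kernel proof of Getzler and Berline--Getzler--Vergne: McKean--Singer, Getzler rescaling of the Lichnerowicz formula (\ref{maineq}), and Mehler's formula. You reduce the ${\rm{Spin}}^{c}$ case to a spin Dirac operator twisted by the locally defined ${\bf{L}}|_{U}^{1/2}$. That reduction is legitimate for the reason you give: the $t\to 0$ asymptotics of ${\rm{str}}\,k_{t}(x,x)$ depend only on the jets of the operator at $x$, and the resulting density involves only $R$ and $F_{A}$.

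Your normalizations also come out right:
\begin{enumerate}
\item[(i)] the curvature $\tfrac{1}{2}F_{A}$ on ${\bf{L}}^{1/2}$ matches the $\tfrac{1}{2}F_{A}$ term in (\ref{maineq});
\item[(ii)] $e^{-F_{A}/2}$ becomes $e^{c_{1}({\bf{L}})/2}$ after the $(2\pi\sqrt{-1})^{-k}$ rescaling of $2k$-forms, with $c_{1}({\bf{L}})=[\frac{\sqrt{-1}}{2\pi}F_{A}]$.
\end{enumerate}

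What your approach buys is a stronger, pointwise statement: equality of index densities, not only of their integrals. It is also self-contained analytically and never needs a global ${\bf{L}}^{1/2}$. The price is that Step 2 is the entire content of the local index theorem, so in a written proof you would cite it rather than redo it.

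The citation is all the paper needs, and its applications need even less. Theorem \ref{theoremB}(D) only uses ${\rm{Index}}(\mathcal{D}_{A})=\chi(X_{P},{\bf{E}})$. That identity follows from the Hodge-theoretic identification of $\ker\mathcal{D}_{A}^{\pm}$ with Dolbeault cohomology, without any index theorem.

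Your K\"{a}hler sanity check in fact extends to every ${\rm{Spin}}^{c}$ structure. Since ${\bf{E}}^{2}={\bf{L}}\otimes{\bf{K}}_{M}$, we have ${\rm{Td}}(M)\,{\rm{ch}}({\bf{E}})=\widehat{A}(M)\,e^{(c_{1}(M)+2c_{1}({\bf{E}}))/2}=\widehat{A}(M)\,e^{c_{1}({\bf{L}})/2}$. So in the K\"{a}hler setting the formula is just Hirzebruch--Riemann--Roch.
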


In the Kähler case, the kernel of the ${\rm{Spin}}^{c}$ Dirac operator admits a beautiful description in terms of the Dolbeault cohomology of the auxiliary line bundle ${\bf{E}}$.

Let $(M^{2m}, g, J)$ be a compact Kähler manifold and let a ${\rm{Spin}}^{c}$ structure with auxiliary line bundle is ${\bf{L}}$. As we have seen, the spinor bundle decomposes as  
\begin{equation}
\mathcal{S}(M) \cong \Lambda^{0,\ast}T^{\ast}M \otimes {\bf{E}},
\end{equation}  
such that ${\bf{E}}^{2} = {\bf{K}}_{M} \otimes {\bf{L}}$, and the ${\rm{Spin}}^{c}$ Dirac operator becomes  
\begin{equation}
\label{DiracDolbeault}
\mathcal{D}_{A} = \sqrt{2}\,\big(\bar{\partial}_{{\bf{E}}} + \bar\partial_{{\bf{E}}}^{\ast}\big),
\end{equation}  
see for instance \cite{duistermaat1996heat}, where $\bar{\partial}_{\bf{E}}$ is the Dolbeault operator coupled to ${\bf{E}}$, that is 
\begin{equation}
\bar{\partial}_{\bf{E}} \colon \Gamma(\Lambda^{0,\bullet}T^{\ast}M \otimes {\bf{E}}) \to \Gamma(\Lambda^{0,\bullet+1}T^{\ast}M \otimes {\bf{E}}).
\end{equation}
Now we consider the following definition.

\begin{definition}
We say that $\psi \in \Gamma^{\infty}(\mathcal{S}(M))$ is a harmonic spinor if $\mathcal{D}_{A}\psi = 0$.
\end{definition}

As a consequence of the previous ideas, the space of harmonic spinors is isomorphic to the total Dolbeault cohomology of ${\bf{E}}$:
\begin{equation}
\ker(\mathcal{D}_{A}) \;\cong\; \bigoplus_{p=0}^{m} H^p\bigl(M,{\bf{E}}\bigr).
\end{equation}  
The $\mathbb{Z}_{2}$-grading of the spinor bundle corresponds to the parity of the form degree:
\begin{equation}
\ker(\mathcal{D}_{A}^{+}) \cong \bigoplus_{p\ \text{even}} H^{p}(M,{\bf{E}}), \qquad 
\ker(\mathcal{D}_{A}^{-})\cong \bigoplus_{p\ \text{odd}} H^{p}(M,{\bf{E}}),
\end{equation}  
see for instance \cite{duistermaat1996heat}. Thus the existence of harmonic spinors is completely determined by the cohomology groups of the line bundle ${\bf{E}}$. Moreover, the index of the ${\rm{Spin}}^{c}$ Dirac operator coincides with the holomorphic Euler characteristic of ${\bf{E}}$:
\begin{equation}
{\rm{Index}}(\mathcal{D}_{A}) = \chi(M,{\bf{E}}) = \sum_{p=0}^{m} (-1)^{p} \dim_{\mathbbm{C}} H^{p}(M,{\bf{E}}).
\end{equation} 

\section{Rational homogeneous varieties}
\label{generalities}
In this section, we review some basic generalities about flag varieties. For more details on the subject presented in this section, we suggest \cite{Akhiezer}, \cite{Flagvarieties}, \cite{HumphreysLAG}, \cite{BorelRemmert}.
\subsection{The Picard group of flag varieties}
\label{subsec3.1}
Let $G^{\mathbbm{C}}$ be a connected, simply connected, and complex Lie group with simple Lie algebra $\mathfrak{g}^{\mathbbm{C}}$. By fixing a Cartan subalgebra $\mathfrak{h}$ and a simple root system $\Delta \subset \mathfrak{h}^{\ast}$, we have a triangular decomposition of $\mathfrak{g}^{\mathbbm{C}}$ given by
\begin{center}
$\mathfrak{g}^{\mathbbm{C}} = \mathfrak{n}^{-} \oplus \mathfrak{h} \oplus \mathfrak{n}^{+}$, 
\end{center}
where $\mathfrak{n}^{-} = \sum_{\alpha \in \Phi^{-}}\mathfrak{g}_{\alpha}$ and $\mathfrak{n}^{+} = \sum_{\alpha \in \Phi^{+}}\mathfrak{g}_{\alpha}$, here we denote by $\Phi = \Phi^{+} \cup \Phi^{-}$ the root system associated with the simple root system $\Delta \subset \mathfrak{h}^{\ast}$. Let us denote by $\kappa$ the Cartan-Killing form of $\mathfrak{g}^{\mathbbm{C}}$. From this, for every  $\alpha \in \Phi^{+}$, we have $h_{\alpha} \in \mathfrak{h}$, such  that $\alpha = \kappa(\cdot,h_{\alpha})$, and we can choose $x_{\alpha} \in \mathfrak{g}_{\alpha}$ and $y_{-\alpha} \in \mathfrak{g}_{-\alpha}$, such that $[x_{\alpha},y_{-\alpha}] = h_{\alpha}$. From these data, we can define a Borel subalgebra\footnote{A maximal solvable subalgebra of $\mathfrak{g}^{\mathbbm{C}}$.} by setting $\mathfrak{b} = \mathfrak{h} \oplus \mathfrak{n}^{+}$. 

\begin{remark}
In the above setting, $\forall \phi \in \mathfrak{h}^{\ast}$, we also denote $\langle \phi, \alpha \rangle = \phi(h_{\alpha})$, $\forall \alpha \in \Phi^{+}$.
\end{remark}

Now we consider the following result (see for instance \cite{Flagvarieties}, \cite{HumphreysLAG}):
\begin{theorem}
Any two Borel subgroups are conjugate.
\end{theorem}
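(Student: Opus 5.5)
The statement to be proved is the classical conjugacy theorem: for a connected complex linear algebraic group $G^{\mathbbm{C}}$ (here simply connected with simple Lie algebra), any two Borel subgroups $B, B'$ are conjugate. I would follow the standard Borel fixed point route, which is the cleanest approach and avoids any Lie algebra case analysis.

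\textbf{Step 1 (Borel fixed point theorem).} First I would establish that a connected solvable algebraic group $S$ acting morphically on a nonempty complete (e.g. projective) variety $Y$ has a fixed point. Argue by induction on $\dim S$. The derived subgroup $[S,S]$ is connected, closed, solvable, and of smaller dimension, so by induction its fixed point set $Y'$ is nonempty. Moreover $Y'$ is closed, hence complete, and it is $S$-stable because $[S,S]$ is normal in $S$. Next take a closed $S$-orbit $S\cdot y \subset Y'$; one exists because orbits of minimal dimension are closed. Its stabilizer contains $[S,S]$, so the stabilizer is normal and $S/S_{y}$ is affine. Thus $S \cdot y \cong S/S_y$ is both affine and complete, and it is connected. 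Hence it is a point, which is the required fixed point.

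\textbf{Step 2 (projectivity of $G^{\mathbbm{C}}/B$).} Let $B$ be a Borel subgroup of maximal dimension among connected solvable subgroups. By Chevalley's theorem, realize $G^{\mathbbm{C}}/B$ as an orbit $G^{\mathbbm{C}}\cdot[v]$ in some $\mathbbm{P}(V)$. By Lie–Kolchin, $B$ fixes a full flag in $V$, so $B$ is the stabilizer of a point $f$ in the flag variety $\mathcal{F}(V)$.

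Choose an orbit $G^{\mathbbm{C}}\cdot f'$ of minimal dimension. It is closed, hence projective. Its stabilizer $G_{f'}$ is solvable, since it lies in the upper triangular group for $f'$, and so $\dim G_{f'}\le \dim B$. Minimality of the orbit dimension then forces $\dim G^{\mathbbm{C}}\cdot f = \dim G^{\mathbbm{C}}\cdot f'$, so the orbit of $f$ is itself closed and $G^{\mathbbm{C}}/B$ is projective. In our concrete setting one could alternatively embed $G^{\mathbbm{C}}/B$ as the closed orbit of a highest weight line $[v_{\lambda}]\subset \mathbbm{P}(V(\lambda))$ for $\lambda$ regular dominant. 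I expect this step to be the main technical obstacle: the separating-stabilizer-versus-subgroup issue (scheme-theoretic versus reduced stabilizers) is subtle in general, though harmless in characteristic zero.

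\textbf{Step 3 (conclusion).} Let $B'$ be any Borel subgroup. It acts on the projective variety $G^{\mathbbm{C}}/B$ by left multiplication, so by Step 1 it fixes some coset $gB$. This gives $B'gB = gB$, that is, $g^{-1}B'g \subset B$. Since $g^{-1}B'g$ is a maximal connected solvable subgroup, equality holds and $B' = gBg^{-1}$.

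As a by-product, every Borel subgroup has the maximal dimension. In particular the Borel subgroup with Lie algebra $\mathfrak{b}=\mathfrak{h}\oplus\mathfrak{n}^{+}$ represents the unique conjugacy class.
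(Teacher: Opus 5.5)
Your proposal is correct and is the standard Borel fixed point argument (as in Humphreys, \emph{Linear Algebraic Groups}, \S 21). The paper gives no proof of its own and simply cites \cite{Flagvarieties}, \cite{HumphreysLAG}, which use exactly this argument; the one step to tighten is in Step 2, where you should apply Lie--Kolchin to $B$ acting on $V/\mathbbm{C}v$ so that the $B$-stable full flag $f$ begins with $\mathbbm{C}v$, since only then does $G^{\mathbbm{C}}_{f}\subset G^{\mathbbm{C}}_{[v]}=B$ follow, giving $G^{\mathbbm{C}}_{f}=B$.
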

From the result above, given a Borel subgroup $B \subset G^{\mathbbm{C}}$, up to conjugation, we can always suppose that $B = \exp(\mathfrak{b})$. In this setting, given a parabolic Lie subgroup\footnote{A Lie subgroup which contains some Borel subgroup.} $P \subset G^{\mathbbm{C}}$, without loss of generality, we can suppose that
\begin{center}
$P  = P_{I}$, \ for some \ $I \subset \Delta$,
\end{center}
where $P_{I} \subset G^{\mathbbm{C}}$ is the parabolic subgroup which integrates the Lie subalgebra 
\begin{center}

$\mathfrak{p}_{I} = \mathfrak{n}^{+} \oplus \mathfrak{h} \oplus \mathfrak{n}(I)^{-}$, \ with \ $\mathfrak{n}(I)^{-} = \displaystyle \sum_{\alpha \in \langle I \rangle^{-}} \mathfrak{g}_{\alpha}$. 

\end{center}
By definition, we have that $P_{I} = N_{G^{\mathbbm{C}}}(\mathfrak{p}_{I})$, where $N_{G^{\mathbbm{C}}}(\mathfrak{p}_{I})$ is the normalizer in  $G^{\mathbbm{C}}$ of $\mathfrak{p}_{I} \subset \mathfrak{g}^{\mathbbm{C}}$, see for instance \cite[\S 3.1]{Akhiezer}. A complex flag variety $X$ is a compact simply connected homogeneous complex manifold defined by
\begin{equation}
X_{P} := G^{\mathbbm{C}}/P = G/G \cap P,
\end{equation}
where $G^{\mathbbm{C}}$ is a complex simple Lie group with compact real form given by $G$, and $P \subset G^{\mathbbm{C}}$ is a parabolic Lie subgroup. In order to describe the Picard group of $X_{P}$, let us recall some basic facts about the representation theory of $\mathfrak{g}^{\mathbbm{C}}$, a detailed exposition on the subject can be found in \cite{Humphreys}. For every $\alpha \in \Phi$, let
$$\alpha^{\vee} := \frac{2}{\langle \alpha, \alpha \rangle}\alpha.$$ 
The fundamental weights $\{\varpi_{\alpha} \ | \ \alpha \in \Delta\} \subset \mathfrak{h}^{\ast}$ of $(\mathfrak{g}^{\mathbbm{C}},\mathfrak{h})$ are defined by requiring that 
\begin{center}
$\langle \varpi_{\alpha}, \beta^{\vee} \rangle= \begin{cases} 1, \ \ \text{if} \ \ \alpha = \beta,\\
0, \ \ \text{if} \ \ \alpha \neq \beta,\end{cases}$
\end{center}
for every $\alpha, \beta \in \Delta$. We denote by 
\begin{equation}
\Lambda = \bigoplus_{\alpha \in \Delta}\mathbbm{Z}\varpi_{\alpha} \ \ \ \text{and} \ \ \ \Lambda^{+} = \bigoplus_{\alpha \in \Delta}\mathbbm{Z}_{\geq 0}\varpi_{\alpha}, 
\end{equation}
respectively, the set of integral  weights and integral dominant weights of $\mathfrak{g}^{\mathbbm{C}}$. From above, we have that the set of isomorphism classes of finite dimensional representations of $\mathfrak{g}^{\mathbbm{C}}$ is parameterized by $\Lambda^{+}$. In particular, every fundamental weight $\varpi_{\alpha} \in \Lambda^{+}$, $\alpha \in \Delta$, defines a finite dimensional irreducible $\mathfrak{g}^{\mathbbm{C}}$-module $V(\varpi_{\alpha})$ with highest-weight vector $v_{\varpi_{\alpha}}^{+}$. On the other hand, by choosing a trivializing open covering $X_{P} = \bigcup_{i \in J}U_{i}$, in terms of $\check{C}$ech cocycles we can write 
\begin{center}
$G^{\mathbbm{C}} = \Big \{(U_{i})_{i \in J}, \psi_{ij} \colon U_{i} \cap U_{j} \to P \Big \}$.
\end{center}
Given $\varpi_{\alpha} \in \Lambda^{+}$, one can consider the induced character $\vartheta_{\varpi_{\alpha}} \in {\text{Hom}}(T^{\mathbbm{C}},\mathbbm{C}^{\times})$, such that $({\rm{d}}\vartheta_{\varpi_{\alpha}})_{e} = \varpi_{\alpha}$. Since $P = P_{I}$, it can be shown that  
\begin{equation}
{\rm{Hom}}(P_{I},\mathbbm{C}^{\times}) \cong {\rm{Hom}}(T(\Delta \backslash I)^{\mathbbm{C}},\mathbbm{C}^{\times}), \ \ \chi \mapsto \chi|_{T(\Delta \backslash I)^{\mathbbm{C}}},
\end{equation}
such that $T(\Delta \backslash I)^{\mathbbm{C}} \subset T^{\mathbbm{C}}$ is the torus
\begin{equation}
T(\Delta \backslash I)^{\mathbbm{C}} = \exp \Big \{ \displaystyle \sum_{\alpha \in  \Delta \backslash I}a_{\alpha}h_{\alpha} \ \Big | \ a_{\alpha} \in \mathbbm{C} \Big \},
\end{equation}
see for instance \cite[Part II, p. 169]{jantzen2003representations}. From above, for every $\alpha \in \Delta \backslash I$, we have a homomorphism $\vartheta_{\varpi_{\alpha}} \colon P \to \mathbbm{C}^{\times}$ one can equip $\mathbbm{C}$ with the structure of a $P$-space, such that $pz = \vartheta_{\varpi_{\alpha}}(p)^{-1}z$, $\forall p \in P$, and $\forall z \in \mathbbm{C}$. Denoting by $\mathbbm{C}_{-\varpi_{\alpha}}$ this $P$-space, we can form an associated holomorphic line bundle $\mathscr{O}_{\alpha}(1) = G^{\mathbbm{C}} \times_{P}\mathbbm{C}_{-\varpi_{\alpha}}$, which can be described in terms of $\check{C}$ech cocycles by
\begin{equation}
\label{linecocycle}
\mathscr{O}_{\alpha}(1) = \Big \{(U_{i})_{i \in J},\vartheta_{\varpi_{\alpha}}^{-1} \circ \psi_{i j} \colon U_{i} \cap U_{j} \to \mathbbm{C}^{\times} \Big \},
\end{equation}
that is, $\mathscr{O}_{\alpha}(1) = \{g_{ij}\} \in \check{H}^{1}(X_{P},\mathcal{O}_{X_{P}}^{\ast})$, such that $g_{ij} = \vartheta_{\varpi_{\alpha}}^{-1} \circ \psi_{i j}$, $\forall i,j \in J$. 

The following result summarizes the main properties to be considered in this work about invertible coherent sheaves and real $G$-invariant $(1,1)$-forms on flag varieties.
\begin{theorem}
\label{AZADBISWAS}
Given a flag variety $X_{P} = G^{\mathbbm{C}}/P$, such that $P = P_{I}$, for some $I \subset \Delta$, then the following hold:
\begin{enumerate}
\item[(1)] As an abelian group, the Picard group of $X_{P}$ is generated by $\mathscr{O}_{\alpha}(1), \alpha \in \Delta \backslash I$, i.e.,
\begin{equation}
{\rm{Pic}}(X_{P}) = \big \langle \mathscr{O}_{\alpha}(1) \ | \ \alpha \in \Delta \backslash I \big \rangle_{\mathbbm{Z}}.
\end{equation}
\item[(2)] $H^{2}(X_{P},\mathbbm{Z}) = \bigoplus_{\alpha \in \Delta \backslash I}\mathbbm{Z}[{\bf{\Omega}}_{\alpha}]$, such that $c_{1}(\mathscr{O}_{\alpha}(1)) = [{\bf{\Omega}}_{\alpha}], \forall \alpha \in \Delta \backslash I$.
\item[(3)] $\forall \alpha \in \Delta \backslash I$, we have $\pi^{\ast}{\bf{\Omega}}_{\alpha} = \sqrt{-1}\partial \overline{\partial} \varphi_{\varpi_{\alpha}}$, such that $\varphi_{\varpi_{\alpha}} \colon G^{\mathbbm{C}} \to \mathbbm{R}$ is given by
\begin{equation}
\varphi_{\varpi_{\alpha}}(g)  = \log \big (||gv_{\varpi_{\alpha}}^{+}|| \big ), \ \ \forall g \in G^{\mathbbm{C}},
\end{equation}
where $\pi \colon G^{\mathbbm{C}} \to G^{\mathbbm{C}} / P = X_{P}$ is the natural projection.
\item[(4)] The K\"{a}hler cone $\mathcal{K}(X_{P})$ of $X_{P}$ is given explicitly by $\mathcal{K}(X_{P}) = \displaystyle \bigoplus_{\alpha \in \Delta \backslash I} \mathbbm{R}^{+}[ {\bf{\Omega}}_{\alpha}]$.
\end{enumerate}
\end{theorem}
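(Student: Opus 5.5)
The statement is the classical description of line bundles and invariant $(1,1)$-forms on flag varieties (Azad--Biswas). I would prove it in four steps: identify ${\rm{Pic}}(X_{P})$ with characters of $P$, compute the curvature of the natural metrics, use Schubert curves to get an integral basis of $H^{2}$, and finally describe the Kähler cone through projective embeddings.

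\emph{Step 1: ${\rm{Pic}}(X_{P})$ and homogeneous bundles.} Since $X_{P}$ is a smooth projective rational variety, $H^{1}(X_{P},\mathscr{O}_{X_{P}}) = H^{2}(X_{P},\mathscr{O}_{X_{P}}) = 0$, for instance by Borel--Weil--Bott applied to the trivial bundle, or by Kodaira vanishing since $X_{P}$ is Fano. The exponential sequence then gives $c_{1}\colon {\rm{Pic}}(X_{P}) \xrightarrow{\sim} H^{2}(X_{P},\mathbbm{Z})$. Because $G^{\mathbbm{C}}$ is simply connected and semisimple, every holomorphic line bundle on $X_{P}$ admits a unique $G^{\mathbbm{C}}$-linearization. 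Hence every line bundle is of the form $G^{\mathbbm{C}}\times_{P}\mathbbm{C}_{\chi}$ with $\chi \in {\rm{Hom}}(P,\mathbbm{C}^{\times})$. By the restriction isomorphism ${\rm{Hom}}(P_{I},\mathbbm{C}^{\times}) \cong {\rm{Hom}}(T(\Delta\backslash I)^{\mathbbm{C}},\mathbbm{C}^{\times})$ recalled above, these characters are exactly the integral weights $\sum_{\alpha\in\Delta\backslash I} n_{\alpha}\varpi_{\alpha}$. It follows that the bundles $\mathscr{O}_{\alpha}(1)$, $\alpha \in \Delta\backslash I$, generate ${\rm{Pic}}(X_{P})$, which is item (1).

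\emph{Step 2: curvature, item (3).} Choose local holomorphic sections $s_{i}\colon U_{i}\to G^{\mathbbm{C}}$ of $\pi$, with $s_{j} = s_{i}\psi_{ij}$ on overlaps. On $P$ the highest weight vector transforms by $p\,v^{+}_{\varpi_{\alpha}} = \vartheta_{\varpi_{\alpha}}(p)v^{+}_{\varpi_{\alpha}}$. Therefore
\begin{equation*}
\log\|s_{j}v^{+}_{\varpi_{\alpha}}\| = \log\|s_{i}v^{+}_{\varpi_{\alpha}}\| + \log|\vartheta_{\varpi_{\alpha}}(\psi_{ij})|.
\end{equation*}
The last term is pluriharmonic, so the local forms $\sqrt{-1}\partial\overline{\partial}\log\|s_{i}v^{+}_{\varpi_{\alpha}}\|$ glue to a global closed real $(1,1)$-form ${\bf{\Omega}}_{\alpha}$, and by construction $\pi^{\ast}{\bf{\Omega}}_{\alpha} = \sqrt{-1}\partial\overline{\partial}\varphi_{\varpi_{\alpha}}$. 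Comparing with the cocycle description (\ref{linecocycle}) of $\mathscr{O}_{\alpha}(1)$, the functions $\|s_{i}v^{+}_{\varpi_{\alpha}}\|^{-2}$ define a Hermitian metric on $\mathscr{O}_{\alpha}(1)$. Its Chern curvature form, normalized by $\frac{\sqrt{-1}}{2\pi}$, is ${\bf{\Omega}}_{\alpha}$ up to the usual factor of $2\pi$ fixed by the normalization convention. Hence $c_{1}(\mathscr{O}_{\alpha}(1)) = [{\bf{\Omega}}_{\alpha}]$. Invariance of ${\bf{\Omega}}_{\alpha}$ under $G$ follows from the $G$-invariance of $\|\cdot\|$.

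\emph{Step 3: integral basis, item (2).} For $\beta \in \Delta\backslash I$, let $\mathbbm{P}^{1}_{\beta}$ be the Schubert curve $\overline{\exp(\mathbbm{C}y_{-\beta})P/P}$. By the Bruhat decomposition, $X_{P}$ has a cell decomposition with only even-dimensional cells. Its $2$-cells are exactly these curves, so $H_{2}(X_{P},\mathbbm{Z})$ is free with basis $\{[\mathbbm{P}^{1}_{\beta}]\}_{\beta \in \Delta\backslash I}$. Restricting $\mathscr{O}_{\alpha}(1)$ to $\mathbbm{P}^{1}_{\beta}$ is an $SL_{2}$-computation, giving
\begin{equation*}
\int_{\mathbbm{P}^{1}_{\beta}}{\bf{\Omega}}_{\alpha} = \langle\varpi_{\alpha},\beta^{\vee}\rangle = \delta_{\alpha\beta}.
\end{equation*}
So the classes $[{\bf{\Omega}}_{\alpha}]$ form the dual basis, which proves (2) and, together with Step 1, the freeness of ${\rm{Pic}}(X_{P})$.

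\emph{Step 4: the Kähler cone, item (4).} The orbit map $gP\mapsto [g v^{+}_{\varpi_{\alpha}}] \in \mathbbm{P}(V(\varpi_{\alpha}))$ pulls back the Fubini--Study form to ${\bf{\Omega}}_{\alpha}$, so each ${\bf{\Omega}}_{\alpha}$ is semipositive. The stabilizer of $[v^{+}_{\varpi_{\alpha}}]$ is the maximal parabolic $P_{\Delta\backslash\{\alpha\}}$, and $\bigcap_{\alpha\in\Delta\backslash I}P_{\Delta\backslash\{\alpha\}} = P_{I}$. Hence the product map
\begin{equation*}
X_{P}\to \prod_{\alpha \in \Delta\backslash I}\mathbbm{P}(V(\varpi_{\alpha}))
\end{equation*}
is an injective equivariant immersion, i.e. an embedding. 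Consequently $\sum_{\alpha} c_{\alpha}{\bf{\Omega}}_{\alpha}$ with all $c_{\alpha}>0$ is Kähler. Conversely, a Kähler class $\sum_{\alpha} c_{\alpha}[{\bf{\Omega}}_{\alpha}]$ must integrate positively over each curve $\mathbbm{P}^{1}_{\beta}$, and by Step 3 that integral equals $c_{\beta}$, so every $c_{\beta}>0$.

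I expect the main obstacle to be the injectivity of the differential of the product map in Step 4. I would handle it at the base point only, using $G$-equivariance: the tangent space there is $\mathfrak{n}(I)^{-}$-complement $\bigoplus_{\gamma\in\Phi_{I}^{+}}\mathfrak{g}_{-\gamma}$. For each such root $\gamma$ I would check that $y_{-\gamma}v^{+}_{\varpi_{\alpha}} \neq 0$ for some $\alpha \in \Delta\backslash I$ with $\langle\varpi_{\alpha},\gamma^{\vee}\rangle>0$. A secondary delicate point is the uniqueness of the $G^{\mathbbm{C}}$-linearization used in Step 1.
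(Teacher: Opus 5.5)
The paper gives no proof of this theorem. It defers to Azad--Biswas (and to Correa and Fulton--Woodward), where the result is obtained at the level of $G$-invariant forms through the quasi-potentials $\log\|gv^{+}_{\varpi_{\alpha}}\|$ on $G^{\mathbbm{C}}$.

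Your route is self-contained and genuinely different: the exponential sequence, Bruhat cells, degrees on the Schubert curves $\mathbbm{P}^{1}_{\beta}$, and the equivariant embedding into $\prod_{\alpha}\mathbbm{P}(V(\varpi_{\alpha}))$. Apart from one constant, discussed below, it is sound.

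\emph{What each approach buys.} Yours obtains the cohomological content of (1), (2), (4) from classical tools only. The form-level approach directly gives the on-the-nose statement that every closed invariant real $(1,1)$-form \emph{equals} $\sum_{\alpha}c_{\alpha}{\bf{\Omega}}_{\alpha}$. The paper relies on this later, for instance in the proof of Proposition \ref{eigenvalueatorigin}. From your route it costs one extra line: an invariant exact real $(1,1)$-form is $\sqrt{-1}\partial\overline{\partial}f$ by the $\partial\overline{\partial}$-lemma, and averaging $f$ over $G$ makes $f$ constant, so the form vanishes.

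\emph{Two simplifications.} First, the linearization theorem in Step 1 is unnecessary, so your ``delicate point'' disappears. You have ${\rm{Pic}}(X_{P})\cong H^{2}(X_{P},\mathbbm{Z})={\rm{Hom}}(H_{2}(X_{P},\mathbbm{Z}),\mathbbm{Z})$, since $H_{1}=0$. The degree pairing $\langle c_{1}(\mathscr{O}_{\alpha}(1)),[\mathbbm{P}^{1}_{\beta}]\rangle=\delta_{\alpha\beta}$ from Step 3 then already shows that the $\mathscr{O}_{\alpha}(1)$ form a $\mathbbm{Z}$-basis. Second, your immersion check works as planned. The vectors $y_{-\gamma}v^{+}_{\varpi_{\alpha}}$, $\gamma\in\Phi_{I}^{+}$, lie in distinct weight spaces $\varpi_{\alpha}-\gamma$. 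Moreover $\langle\varpi_{\alpha},\gamma^{\vee}\rangle>0$ for any $\alpha\in\Delta\backslash I$ occurring in $\gamma$. Hence the differential at $o$ is injective.

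\emph{The constant in Step 2 cannot be waved away.} With your metric $h_{i}=\|s_{i}v^{+}_{\varpi_{\alpha}}\|^{-2}$ the Chern curvature is $F=2\partial\overline{\partial}\log\|s_{i}v^{+}_{\varpi_{\alpha}}\|$. Therefore $\frac{\sqrt{-1}}{2\pi}F=\frac{1}{\pi}{\bf{\Omega}}_{\alpha}$ when ${\bf{\Omega}}_{\alpha}$ is defined by item (3) with $\varphi_{\varpi_{\alpha}}=\log\|gv^{+}_{\varpi_{\alpha}}\|$. Concretely, on $\mathbbm{P}^{1}$ one has $\varphi=\frac{1}{2}\log(1+|z|^{2})$ and $\int_{\mathbbm{P}^{1}}\sqrt{-1}\partial\overline{\partial}\varphi=\pi$.

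So items (2) and (3), read literally, are incompatible by a factor of $\pi$. Your Step 3 identity would then read $\int_{\mathbbm{P}^{1}_{\beta}}{\bf{\Omega}}_{\alpha}=\pi\,\delta_{\alpha\beta}$. No ``usual factor of $2\pi$'' absorbs this, since the convention $c_{1}=[\frac{\sqrt{-1}}{2\pi}F]$ is fixed. You should normalize the potential explicitly as $\varphi_{\varpi_{\alpha}}=\frac{1}{2\pi}\log\|gv^{+}_{\varpi_{\alpha}}\|^{2}$. With that normalization, items (2) and (3), Lemma \ref{funddynkinline}, and your Steps 2--4 are all consistent.
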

The proof of the above theorem follows from \cite{AZAD}, see also \cite{correa2023deformed}, \cite{FultonWoodward}. 
\begin{remark}
\label{harmonic2forms}
Given any $G$-invariant Riemannian metric $g$ on $X_{P}$, denoting by $\mathscr{H}^{2}(X_{P},g)$ the space of real harmonic 2-forms on $X_{P}$ with respect to $g$, then
\begin{equation}
\mathscr{H}^{2}(X_{P},g) = \mathscr{I}_{G}^{1,1}(X_{P}),
\end{equation}
where $\mathscr{I}_{G}^{1,1}(X_{P})$ is the space of closed $G$-invariant real $(1,1)$-forms on $X_{P}$, for more details, see for instance \cite[Lemma 3.1]{MR528871}.
\end{remark}
In the above setting, we denote the weights of $P = P_{I}$ by  
\begin{center}
$\displaystyle \Lambda_{P} := \bigoplus_{\alpha \in \Delta \backslash I}\mathbbm{Z}\varpi_{\alpha}$. 
\end{center}
From this, the previous theorem provides $\Lambda_{P} \cong {\rm{Hom}}(P,\mathbbm{C}^{\times}) \cong {\rm{Pic}}(X_{P})$, such that\footnote{For every $\alpha \in \Delta \backslash I$, we denote $\mathscr{O}_{\alpha}(\ell) := \mathscr{O}_{\alpha}(1)^{\otimes \ell}$, $\ell \in \mathbbm{Z}$.} 
\begin{enumerate}
\item$ \displaystyle \lambda = \sum_{\alpha \in \Delta \backslash I}k_{\alpha}\varpi_{\alpha} \mapsto \prod_{\alpha \in \Delta \backslash I} \vartheta_{\varpi_{\alpha}}^{k_{\alpha}} \mapsto \bigotimes_{\alpha \in \Delta \backslash I} \mathscr{O}_{\alpha}(k_{\alpha})$.
\item $ \displaystyle {\bf{E}} \mapsto \vartheta_{{\bf{E}}}: = \prod_{\alpha \in \Delta \backslash I} \vartheta_{\varpi_{\alpha}}^{\langle c_{1}({\bf{E}}),[\mathbbm{P}^{1}_{\alpha}] \rangle} \mapsto \phi({\bf{E}}) := \sum_{\alpha \in \Delta \backslash I}\langle c_{1}({\bf{E}}),[\mathbbm{P}^{1}_{\alpha}] \rangle\varpi_{\alpha}$.
\end{enumerate}
Thus, $\forall {\bf{E}} \in {\rm{Pic}}(X_{P})$, we have $\phi({\bf{E}}) \in \Lambda_{P}$. More generally, $\forall \xi \in H^{1,1}(X_{P},\mathbbm{R})$, we can attach $\lambda (\xi) \in \Lambda_{P}\otimes \mathbbm{R}$, such that
\begin{equation}
\label{weightcohomology}
\phi(\xi) := \sum_{\alpha \in \Delta \backslash I}\langle \xi,[\mathbbm{P}^{1}_{\alpha}] \rangle\varpi_{\alpha}.
\end{equation}
From above, for every holomorphic vector bundle ${\bf{E}} \to X_{P}$, we define $\phi({\bf{E}}) \in \Lambda_{P}$, such that 
\begin{equation}
\label{weightholomorphicvec}
\phi({\bf{E}}) := \sum_{\alpha \in \Delta \backslash I} \langle c_{1}({\bf{E}}),[\mathbbm{P}_{\alpha}^{1}] \rangle \varpi_{\alpha},
\end{equation}
where $c_{1}({\bf{E}}) = c_{1}(\bigwedge^{r}{\bf{E}})$, such that $r = \rank({\bf{E}})$.

By the ideas presented in \cite{correa2023deformed}, \cite{FultonWoodward} and \cite{AZAD}), we have the following result.

\begin{lemma}
\label{funddynkinline}
Consider $\mathbbm{P}_{\beta}^{1} = \overline{\exp(\mathfrak{g}_{-\beta}){o}} \subset X_{P}$, such that $\beta \in \Phi_{I}^{+}$. Then, 
\begin{equation}
\int_{\mathbbm{P}_{\beta}^{1}} {\bf{\Omega}}_{\alpha} = \langle \varpi_{\alpha}, \beta^{\vee}  \rangle, \ \forall \alpha \in \Delta \backslash I.
\end{equation}
In particular, the cone of curves ${\rm{NE}}(X_{P})$ is given by ${\rm{NE}}(X_{P}) = \bigoplus_{\alpha \in \Delta \backslash I} \mathbbm{Z}[\mathbbm{P}_{\beta}^{1}]$.
\end{lemma}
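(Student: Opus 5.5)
The plan is to reduce both claims to a computation on the $\mathfrak{sl}_2$-subalgebra attached to $\beta$ and its orbit curve. Let $o = eP \in X_{P}$ and let $\beta \in \Phi_{I}^{+}$. Consider the subgroup $S_{\beta} \subset G^{\mathbbm{C}}$ integrating $\mathfrak{s}_{\beta} = \mathfrak{g}_{\beta} \oplus \mathbbm{C}h_{\beta} \oplus \mathfrak{g}_{-\beta} \cong \mathfrak{sl}_{2}(\mathbbm{C})$; since $G^{\mathbbm{C}}$ is simply connected, $S_{\beta}\cong {\rm{SL}}_{2}(\mathbbm{C})$. The orbit $S_{\beta}\cdot o$ is closed. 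It equals $\overline{\exp(\mathfrak{g}_{-\beta})o} = \mathbbm{P}^{1}_{\beta}$, and it is isomorphic to $S_{\beta}/(S_{\beta}\cap P)$. Because $\beta \notin \langle I\rangle$, we have $\mathfrak{g}_{-\beta}\not\subset \mathfrak{p}_{I}$, so $S_{\beta}\cap P$ is the Borel subgroup $B_{\beta}$ of $S_{\beta}$ generated by $\exp(\mathbbm{C}h_{\beta})$ and $\exp(\mathfrak{g}_{\beta})$. This identifies $\mathbbm{P}^{1}_{\beta}\cong {\rm{SL}}_{2}/B \cong \mathbbm{P}^{1}$.

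Next I restrict the line bundle. We have $\mathscr{O}_{\alpha}(1)|_{\mathbbm{P}^{1}_{\beta}} = S_{\beta}\times_{B_{\beta}}\mathbbm{C}_{-\varpi_{\alpha}}$. The character $\vartheta_{\varpi_{\alpha}}$ restricted to the torus $\exp(\mathbbm{C}h_{\beta})$ has differential $\varpi_{\alpha}(h_{\beta})$. In the normalization where the coroot $\beta^{\vee}$ corresponds to the standard $\mathrm{diag}(1,-1)$ of $\mathfrak{sl}_2$, this weight is the integer $\langle \varpi_{\alpha},\beta^{\vee}\rangle$. For ${\rm{SL}}_{2}/B$, the homogeneous bundle built from the $B$-character of weight $-k$, with the sign convention used in Eq. (\ref{linecocycle}), is $\mathscr{O}_{\mathbbm{P}^{1}}(k)$. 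Hence
\begin{equation*}
\int_{\mathbbm{P}^{1}_{\beta}}{\bf{\Omega}}_{\alpha} = \deg\big(\mathscr{O}_{\alpha}(1)|_{\mathbbm{P}^{1}_{\beta}}\big) = \langle \varpi_{\alpha},\beta^{\vee}\rangle,
\end{equation*}
using $c_{1}(\mathscr{O}_{\alpha}(1)) = [{\bf{\Omega}}_{\alpha}]$ from Theorem \ref{AZADBISWAS}(2). A second check is available from Theorem \ref{AZADBISWAS}(3). Pulling back along $z\mapsto \exp(z y_{-\beta})$ and computing $\log\|\exp(zy_{-\beta})v^{+}_{\varpi_{\alpha}}\|$ in the $\mathfrak{sl}_2$-string through $v^{+}_{\varpi_{\alpha}}$ gives $\tfrac{n}{2}\log(1+c|z|^{2})$ with $n = \langle\varpi_{\alpha},\beta^{\vee}\rangle$. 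Its $\sqrt{-1}\partial\bar\partial$ integrates to $n$ over $\mathbbm{C}$, up to the normalization of ${\bf{\Omega}}_{\alpha}$.

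For the cone of curves, take $\beta = \gamma \in \Delta\backslash I$. Then $\langle \varpi_{\alpha},\gamma^{\vee}\rangle = \delta_{\alpha\gamma}$, so the classes $[\mathbbm{P}^{1}_{\gamma}]$ pair with the basis $[{\bf{\Omega}}_{\alpha}]$ of $H^{2}(X_{P},\mathbbm{Z})$ as a dual basis. Hence they form a basis of $H_{2}(X_{P},\mathbbm{Z})$. By Theorem \ref{AZADBISWAS}(4) the nef cone is spanned by the $[{\bf{\Omega}}_{\alpha}]$. Since $X_{P}$ is Fano, Kleiman duality makes ${\rm{NE}}(X_{P})$ the dual cone, which is therefore generated by the $[\mathbbm{P}^{1}_{\gamma}]$, $\gamma\in\Delta\backslash I$. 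For any other $\beta\in\Phi_{I}^{+}$, the formula expresses $[\mathbbm{P}^{1}_{\beta}] = \sum_{\gamma}\langle\varpi_{\gamma},\beta^{\vee}\rangle[\mathbbm{P}^{1}_{\gamma}]$ with nonnegative coefficients, consistent with this.

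The main obstacle is the sign and normalization bookkeeping. One must check that the convention $\mathbbm{C}_{-\varpi_{\alpha}}$, together with $\mathfrak{p}_{I}$ containing $\mathfrak{n}^{+}$ rather than $\mathfrak{n}^{-}$, gives degree $+\langle\varpi_{\alpha},\beta^{\vee}\rangle$ and not its negative. One must also make sure that the pairing $\langle\cdot,\cdot\rangle$ defined via $h_{\alpha}$ and the Killing form agrees with evaluation on the $\mathfrak{sl}_2$-coroot. I would settle both by checking directly that $\mathscr{O}_{\alpha}(1)$ has a nonzero section, for instance $g\mapsto\langle gv^{+}_{\varpi_{\alpha}},\xi\rangle$ for a suitable functional $\xi$, that is nonvanishing along the $\mathbbm{P}^1_\beta$ string. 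This forces the degree to be nonnegative, and the size of the $\mathfrak{sl}_2$-string through $v^{+}_{\varpi_{\alpha}}$ then fixes its value.
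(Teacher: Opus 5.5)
Your proof is correct. The paper itself gives no proof of this lemma; it only says the statement follows ``by the ideas presented in'' the works of Correa, Fulton--Woodward and Azad--Biswas. So your argument is a genuinely self-contained alternative rather than a reconstruction of the paper's argument.

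The route implicit in that literature, and suggested by Theorem \ref{AZADBISWAS}(3), is essentially your ``second check''. One pulls the potential back along $z\mapsto\exp(zf_{-\beta})o$ and uses that $v^{+}_{\varpi_{\alpha}}$ is a highest weight vector of weight $n=\langle\varpi_{\alpha},\beta^{\vee}\rangle$ for the $\mathfrak{sl}_{2}$-triple of $\beta$. This gives $\|\exp(zf_{-\beta})v^{+}_{\varpi_{\alpha}}\|^{2}=(1+c|z|^{2})^{n}\|v^{+}_{\varpi_{\alpha}}\|^{2}$, which one then integrates. Topologically, this corresponds to the Fulton--Woodward description of $[\mathbbm{P}^{1}_{\beta}]$ as the class of $\beta^{\vee}$ in the coroot lattice modulo the coroot lattice of $I$.

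Your main route is different. You restrict $\mathscr{O}_{\alpha}(1)$ to the closed orbit $S_{\beta}o\cong S_{\beta}/B_{\beta}\cong\mathbbm{P}^{1}$ and read off the degree from the $B_{\beta}$-character. This is purely algebraic and uses only $c_{1}(\mathscr{O}_{\alpha}(1))=[{\bf{\Omega}}_{\alpha}]$, so it does not depend on how the invariant representative is normalized. That matters here. With Theorem \ref{AZADBISWAS}(3) as literally printed (no factor $1/\pi$), $\sqrt{-1}\partial\bar\partial\,\tfrac{n}{2}\log(1+c|z|^{2})$ integrates over $\mathbbm{C}$ to $\pi n$, not $n$. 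So your hedge ``up to normalization'' is genuinely needed, and item (2) is what actually fixes the value.

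Your sign check is also right. The function $f(g)=\xi(gv^{+}_{\varpi_{\alpha}})$ satisfies $f(gp)=\vartheta_{\varpi_{\alpha}}(p)f(g)$ for all $p\in P$, since $\mathbbm{C}v^{+}_{\varpi_{\alpha}}$ is $P$-stable when $\alpha\notin I$. This is exactly the equivariance required of sections of $G^{\mathbbm{C}}\times_{P}\mathbbm{C}_{-\varpi_{\alpha}}$, so the degree is $+n$ and not $-n$.

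For the cone, you read the garbled statement correctly as ${\rm{NE}}(X_{P})=\bigoplus_{\gamma\in\Delta\backslash I}\mathbbm{Z}_{\geq 0}[\mathbbm{P}^{1}_{\gamma}]$. The Fano hypothesis is not actually needed. Kleiman duality identifies $\overline{{\rm{NE}}}(X_{P})$ with the dual of the simplicial nef cone, and that dual cone is spanned by classes of actual curves, which forces ${\rm{NE}}=\overline{{\rm{NE}}}$. Integrality of the coefficients then follows from the dual-basis property together with the fact that $H_{2}(X_{P},\mathbbm{Z})$ is free, by the Bruhat decomposition.
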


\begin{remark}
\label{bigcellcosntruction}
In order to perform some local computations we shall consider the open set $U^{-}(P) \subset X_{P}$ defined by the ``opposite" big cell in $X_{P}$. This open set is a distinguished coordinate neighbourhood $U^{-}(P) \subset X_{P}$ of $o = eP \in X_{P}$ defined as follows
\begin{equation}
\label{bigcell}
 U^{-}(P) =  B^{-}o = R_{u}(P_{I})^{-}o\subset X_{P},  
\end{equation}
 where $B^{-} = \exp(\mathfrak{h} \oplus \mathfrak{n}^{-})$, and
 
 \begin{center}
 
 $R_{u}(P_{I})^{-} = \displaystyle \prod_{\alpha \in \Phi_{I}^{+}}N_{\alpha}^{-}$, \ \ (opposite unipotent radical)
 
 \end{center}
with $N_{\alpha}^{-} = \exp(\mathfrak{g}_{-\alpha})$, $\forall \alpha \in \Phi_{I}^{+}$, e.g. \cite[\S 3]{Lakshmibai2},\cite[\S 3.1]{Akhiezer}. It is worth mentioning that the opposite big cell defines a contractible open dense subset in $X_{P}$, thus the restriction of any vector bundle (principal bundle) over this open set is trivial.
\end{remark}

Combining Theorem \ref{AZADBISWAS}, Lemma \ref{funddynkinline} and Proposition \ref{eigenvalue}, we have the following result.

\begin{proposition}
\label{eigenvalueatorigin}
Let $X_{P}$ be a flag variety and let $\omega$ be a $G$-invariant K\"{a}hler metric on $X_{P}$. Then, for every closed $G$-invariant real $(1,1)$-form $\theta$, the eigenvalues of the endomorphism $\omega^{-1} \circ \theta$ are given by 
\begin{equation}
\label{eigenvalues}
{\bf{\lambda}}_{\beta}(\omega^{-1} \circ \theta) = \frac{ \langle \phi([\theta]), \beta^{\vee} \rangle}{\langle \phi([\omega]), \beta^{\vee} \rangle}, \ \ \beta \in \Phi_{I}^{+},
\end{equation}
such that $\phi([\theta]), \phi([\omega]) \in \Lambda_{P} \otimes \mathbbm{R}$.
\end{proposition}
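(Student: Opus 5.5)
By $G$-invariance of $\omega$ and $\theta$, it is enough to compute the eigenvalues of $\omega^{-1}\circ\theta$ at the base point $o = eP$. Indeed, the eigenvalues are $G$-invariant functions, and $G$ acts transitively on $X_{P}$, so they are constant. At $o$ we identify
\[
T_{o}^{1,0}X_{P} \cong \mathfrak{g}^{\mathbbm{C}}/\mathfrak{p}_{I} \cong \bigoplus_{\beta \in \Phi_{I}^{+}} \mathfrak{g}_{-\beta},
\]
which is a decomposition into pairwise inequivalent one-dimensional representations of the torus $T = G \cap T^{\mathbbm{C}}$, which lies in the isotropy group. Every $T$-invariant Hermitian form on this space is diagonal in the root-vector basis $\{y_{-\beta}\}$, since distinct weights $-\beta$ cannot pair invariantly. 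Thus $\omega_{o}$ and $\theta_{o}$ are simultaneously diagonal:
\[
\omega_{o} = \sqrt{-1}\sum_{\beta}a_{\beta}\, y_{-\beta}^{\ast}\wedge\overline{y_{-\beta}^{\ast}}, \qquad
\theta_{o} = \sqrt{-1}\sum_{\beta}b_{\beta}\, y_{-\beta}^{\ast}\wedge\overline{y_{-\beta}^{\ast}}.
\]
It follows that the eigenvalues of $\omega^{-1}\circ\theta$ are $b_{\beta}/a_{\beta}$, $\beta \in \Phi_{I}^{+}$. This is the frame provided by Proposition \ref{eigenvalue}, after rescaling each $y_{-\beta}$ by $a_{\beta}^{-1/2}$.

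Next I identify the coefficients $a_{\beta}$ and $b_{\beta}$ with Lie-theoretic data. By Remark \ref{harmonic2forms} and Theorem \ref{AZADBISWAS}, any closed invariant real $(1,1)$-form is uniquely written as $\theta = \sum_{\alpha \in \Delta\backslash I} c_{\alpha}{\bf{\Omega}}_{\alpha}$ with $c_{\alpha} = \langle [\theta],[\mathbbm{P}^{1}_{\alpha}]\rangle$, so $\phi([\theta]) = \sum c_{\alpha}\varpi_{\alpha}$. By linearity it therefore suffices to compute the coefficient of ${\bf{\Omega}}_{\alpha}$ at $o$ in the direction $y_{-\beta}$.

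I do this using the potential $\varphi_{\varpi_{\alpha}}(g) = \log\|g v^{+}_{\varpi_{\alpha}}\|$ restricted to the big cell $U^{-}(P)$ of Remark \ref{bigcellcosntruction}. Along the coordinate $z \mapsto \exp(z y_{-\beta})o$ one gets
\[
\|\exp(z y_{-\beta})v^{+}\|^{2} = 1 + |z|^{2}\|y_{-\beta}v^{+}\|^{2} + O(|z|^{4}),
\]
and $\|y_{-\beta}v^{+}\|^{2} = \langle \varpi_{\alpha}, h_{\beta}\rangle$, which is a positive multiple of $\langle \varpi_{\alpha},\beta^{\vee}\rangle$. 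The mixed second-order terms between distinct roots vanish at $o$ because they are exactly the off-diagonal entries already excluded by torus invariance. Hence
\[
b_{\beta} = \mu_{\beta}\,\langle \phi([\theta]),\beta^{\vee}\rangle, \qquad a_{\beta} = \mu_{\beta}\,\langle\phi([\omega]),\beta^{\vee}\rangle,
\]
with the same normalizing constant $\mu_{\beta} > 0$, which depends only on $\beta$ and the normalization of $y_{-\beta}$. Taking the ratio cancels $\mu_{\beta}$ and gives \eqref{eigenvalues}.

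As a consistency check, I would integrate $\theta$ over $\mathbbm{P}^{1}_{\beta}$ as in Lemma \ref{funddynkinline}. The restriction of ${\bf{\Omega}}_{\alpha}$ to this $\mathrm{SL}_{2}$-orbit is an invariant form whose total integral is $\langle\varpi_{\alpha},\beta^{\vee}\rangle$, and this again pins down $b_{\beta}$ up to the universal factor $\mu_{\beta}$.

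The main obstacle is the claim that the diagonal coefficient of $\theta_{o}$ along $\mathfrak{g}_{-\beta}$ is proportional to $\langle\phi([\theta]),\beta^{\vee}\rangle$ with a factor independent of $\theta$. Torus invariance alone gives diagonality but not this proportionality. For that I rely on the explicit potential computation, with the key identity $\|y_{-\beta}v^{+}_{\varpi_{\alpha}}\|^{2} = \varpi_{\alpha}(h_{\beta})$. This identity holds because $\|y_{-\beta}v^{+}\|^{2} = \langle x_{\beta}y_{-\beta}v^{+}, v^{+}\rangle$ (using that $y_{-\beta}$ is adjoint to $x_{\beta}$ up to sign), $x_{\beta}v^{+} = 0$, and $[x_{\beta},y_{-\beta}] = h_{\beta}$. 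A further subtlety is that this identity must hold for arbitrary $\beta \in \Phi_{I}^{+}$, not just simple roots, which is why I use the big-cell chart rather than the curves $\mathbbm{P}^{1}_{\alpha}$ alone.
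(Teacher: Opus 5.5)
Your proposal is correct and follows essentially the same route as the paper: reduce to $o$ by homogeneity, use invariance under the maximal torus to make both forms diagonal in the big-cell root-vector basis, write $\theta=\sum_{\alpha}c_{\alpha}{\bf{\Omega}}_{\alpha}$, and show that the diagonal coefficient along $\mathfrak{g}_{-\beta}$ is a positive multiple of $\langle \phi([\theta]),\beta^{\vee}\rangle$ with a factor independent of $\theta$, so the ratio gives the stated eigenvalues. The one difference is that you derive this coefficient from the potential via $\|y_{-\beta}v^{+}_{\varpi_{\alpha}}\|^{2}=\varpi_{\alpha}(h_{\beta})$, whereas the paper cites \cite{AZAD} for that step; note that the adjoint relation is $y_{-\beta}^{\ast}=c\,x_{\beta}$ with $c>0$ forced (not merely ``up to sign''), and that $c$ is independent of the representation, which is what makes $\mu_{\beta}$ independent of $\alpha$.
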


\begin{proof}
Let $\theta \in \Omega^{1,1}(X_P)^G$ be a closed $G$-invariant real $(1,1)$-form. Since the eigenvalues of the associated endomorphism 
\begin{equation}
\omega^{-1} \circ \theta \colon TX_{P} \to TX_{P}
\end{equation}
are constant it suffices to compute them at the base point $o=eP$. In this particular setting, we can take holomorphic coordinates around $o \in X_{P}$ of the form
\begin{equation}
(w_{\beta})_{\beta \in \Phi_{I}^{+}} \mapsto \exp\Big ( \sum_{\beta \in \Phi_{I}^{+}}w_{\beta}f_{-\beta}\Big )o,
\end{equation}
see Remark \ref{bigcellcosntruction}. Here we consider the isomorphism $\phi_{\beta} \colon \mathfrak{sl}_{2}(\mathbbm{C}) \to \mathfrak{g}_{\beta} \oplus [\mathfrak{g}_{\beta},\mathfrak{g}_{-\beta}] \oplus \mathfrak{g}_{-\beta}$, where 
\begin{equation}
\phi_{\beta} \colon \begin{pmatrix}0 & 1 \\
0 & 0 \end{pmatrix} \mapsto e_{\beta}, \ \  \phi_{\beta} \colon \begin{pmatrix}0 & 0 \\
1 & 0 \end{pmatrix} \mapsto f_{-\beta}, \ \ \phi_{\beta} \colon \begin{pmatrix}1 & \ \ 0 \\
0 & -1 \end{pmatrix} \mapsto \frac{2}{\langle \beta,\beta \rangle}h_{\beta},
\end{equation}
such that $e_{\beta} \in \mathfrak{g}_{\beta}$, $f_{-\beta} \in \mathfrak{g}_{-\beta}$, and $[e_{\beta},f_{-\beta}] = \frac{2}{\langle \beta,\beta \rangle}h_{\beta}$.

In this coordinate system we have $\theta = \sqrt{-1}\partial \bar\partial \varphi$, for some local potential $\varphi$. For each $\beta\in\Phi_I^{+}$, consider the tangent vector 
\begin{center}
$\displaystyle \partial_{w_{\beta}}|_{o}=\left.\frac{d}{dw}\right|_{w=0} \exp(w f_{-\beta})\,\mathrm{o}$.
\end{center}
From above, since each $\partial_{w_{\beta}}|_{o}$ is a eigenvector for the $T^{\mathbb C}\cap G$ action with weight $-\beta$, it follows that $\{\partial_{w_{\beta}}|_{o}\}_{\beta \in \Phi_{I}^{+}}$ diagonalizes any $(T^{\mathbb C}\cap G)$-invariant Hermitian form on $T_{\mathrm{o}}X_P$. Since $\theta$ is $G$-invariant, its associated Hermitian form 
\begin{equation}
\mathcal{H}_{\theta}(X,Y) = -\sqrt{-1}\theta(X,\overline{Y}) = \sum_{\alpha,\beta}\frac{\partial^{2}\varphi}{\partial w_{\alpha}\partial \overline{w_{\beta}}}X^{\alpha}\overline{Y^{\beta}}, 
\end{equation}
such that $X = X^{\alpha}\partial_{w_{\alpha}}$ e $Y = Y^{\beta}\partial_{w_{\beta}}$, is $(T^{\mathbb C}\cap G)$-invariant. Hence, denoting 
\begin{equation}
{\bf{q}}_{\beta}(\theta) := \mathcal{H}_{\theta}\big (\partial_{w_{\beta}},\partial_{\overline{w_{\beta}}} \big)(o) = \frac{\partial^{2}\varphi}{\partial w_{\beta}\partial \overline{w_{\beta}}}(o),
\end{equation}
we obtain the following description
\begin{equation}
\label{eqspec1}
\theta_{\mathrm{o}}=\sum_{\beta\in\Phi_I^{+}}{\bf{q}}_{\beta}(\theta)dw_{\beta}\wedge d \overline{w_{\beta}}\big|_{o}.
\end{equation}

On the other hand, by Eq. (\ref{weightcohomology}) and Lemma \ref{funddynkinline}, it follows that $\theta$ can be written as 
\begin{equation}
\label{eqeig1}
\theta=\sum_{\alpha\in\Delta\backslash I}
\pi c_{\alpha}{\bf{\Omega}}_{\alpha},
\end{equation}
where $c_{\alpha}=\frac{\langle \phi([\theta]), [\mathbbm{P}^{1}_{\alpha}]\rangle}{\pi}$, $\forall \alpha \in \Delta \backslash I$. From the above expression we can show that 
\begin{equation}
\mathcal{H}_{\theta}\big (\partial_{w_{\beta}},\partial_{\overline{w_{\beta}}} \big)(o) = \sum_{\alpha \in \Delta \backslash I} \frac{c_{\alpha}}{2}\langle \varpi_{\alpha},\beta^{\vee} \rangle,
\end{equation}
for all $\beta \in \Phi_{I}^{+}$, see for instance \cite{AZAD}. Thus, we conclude that  
 \begin{equation}
 \label{eqspec2}
 \begin{aligned}
     \theta_{o} &= \frac{\sqrt{-1}}{2}\sum_{\beta \in \Phi^{+}_{I}}\left(\sum_{\alpha \in \Delta \backslash I} \frac{c_{\alpha}}{2}\langle \varpi_{\alpha},\beta^{\vee} \rangle\right) dw_{\beta} \wedge d\bar{w}_{\beta}\big|_{o} \\
     & = \frac{\sqrt{-1}}{4\pi}\sum_{\beta \in \Phi^{+}_{I}} \langle \phi([\theta]),\beta^{\vee} \rangle dw_{\beta} \wedge d\bar{w}_{\beta}\big|_{o}.
\end{aligned}    
 \end{equation}
Then, by comparing the equations \eqref{eqspec1} with \eqref{eqspec2}, we obtain
 \begin{equation}
     {\bf{q}}_{\beta}(\theta)= \frac{\sqrt{-1}}{4\pi} \langle \phi([\theta]),\beta^{\vee} \rangle, \ \ \forall \beta \in \Phi_{I}^{+}.
 \end{equation}
Since $\theta$ is an arbitrary closed $G$-invariant real $(1,1)$-form, we conclude that 
 \begin{equation}
     \lambda_{\beta}(\omega^{-1} \circ \theta) =  \frac{{\bf{q}}_{\beta}(\theta)}{{\bf{q}}_{\beta}(\omega)} = \frac{ \langle \phi([\theta]), \beta^{\vee} \rangle}{\langle \phi([\omega]), \beta^{\vee} \rangle}, \ \ \beta \in \Phi_{I}^{+}.
 \end{equation}
\end{proof}

\begin{remark}
\label{normalizationform}
In the setting of the proof of Proposition \ref{eigenvalueatorigin}, if we consider the change of coordinates
\begin{equation}
z_{\beta} := \sqrt{\frac{\langle \phi([\omega]), \beta^{\vee}\rangle}{2\pi}} w_{\beta}, \ \ \beta \in \Phi_{I}^{+},
\end{equation}
we obtain from the previous result the following description pointwise 
\begin{equation}
 \omega = \sum_{\beta \in \Phi_{I}^{+}} \frac{\sqrt{-1}}{2} dz_{\beta} \wedge d\overline{z_{\beta}} \ \ \ \ {\text{and}} \ \ \ \ \displaystyle \theta = \sum_{\beta \in \Phi_{I}^{+}} \frac{\sqrt{-1}}{2} \lambda_{\beta}(\omega^{-1} \circ \theta)dz_{\beta} \wedge d\overline{z_{\beta}},
\end{equation}
for every closed $G$-invariant real $(1,1)$-form $\theta \in \Omega^{1,1}(X_{P})$.
\end{remark}

\begin{remark}
\label{primitivecalc}
Given ${\bf{\Omega}}_{\alpha} \in c_{1}(\mathscr{O}_{\alpha}(1))$, $\alpha \in \Delta \backslash I$, and fixed some $G$-invariant K\"{a}hler metric $\omega$ on $X_{P}$, since ${\bf{\Omega}}_{\alpha}$ is harmonic with respect to $\omega$ (see Remark \ref{harmonic2forms}), it follows that 
\begin{equation}
-{\rm{d}}^{c}\Lambda_{\omega}({\bf{\Omega}}_{\alpha}) = \delta_{\omega}{\bf{\Omega}}_{\alpha} = 0,
\end{equation}
i.e., $\Lambda_{\omega}({\bf{\Omega}}_{\alpha})$ is constant. Thus, we obtain
\begin{equation}
\label{contraction}
\Lambda_{\omega}({\bf{\Omega}}_{\alpha})= {\rm{tr}}(\omega^{-1} \circ {\bf{\Omega}}_{\alpha}) = \sum_{\beta \in \Phi_{I}^{+}} \frac{\langle \varpi_{\alpha}, \beta^{\vee} \rangle}{\langle \phi([\omega]), \beta^{\vee}\rangle},
\end{equation}
for every $\alpha \in \Delta \backslash I$. In particular, for every holomorphic line bundle ${\bf{E}} \in {\rm{Pic}}(X_{P})$, we have a Hermitian structure ${\bf{h}}$ on ${\bf{E}}$, such that the curvature $F_{\nabla}$ of the associated Chern connection $\nabla \myeq {\rm{d}} + \partial \log ({\bf{h}})$, satisfies 
\begin{equation}
\label{tracecurvature}
\frac{\sqrt{-1}}{2\pi} \Lambda_{\omega}(F_{\nabla}) = \sum_{\beta \in \Phi_{I}^{+} } \frac{\langle \phi({\bf{E}}), \beta^{\vee} \rangle}{\langle \phi([\omega]), \beta^{\vee}\rangle}.
\end{equation}
From this, we have that $\nabla$ is a Hermitian-Yang-Mills (HYM) connection (e.g. \cite{Kobayashi+1987}). Notice that 
\begin{equation}
c_{1}({\bf{E}}) = \sum_{\alpha \in \Delta \backslash I}\langle \phi({\bf{E}}), \alpha^{\vee} \rangle [{\bf{\Omega}}_{\alpha}],
\end{equation}
for every ${\bf{E}} \in {\rm{Pic}}(X_{P})$, i.e., the curvature of the HYM connection $\nabla$ on ${\bf{E}}$ coincides with the $G$-invariant representative of $c_{1}({\bf{E}})$. 
\end{remark}

\subsection{The first Chern class of flag varieties} In this subsection, we shall review some basic facts related with the Ricci form of $G$-invariant K\"{a}hler metrics on flag varieties. Let $X_{P}$ be a complex flag variety associated with some parabolic Lie subgroup $P = P_{I} \subset G^{\mathbbm{C}}$. By considering the identification $T_{{\rm{o}}}^{1,0}X_{P} \cong \mathfrak{m} \subset \mathfrak{g}^{\mathbbm{C}}$, such that 

\begin{center}
$\mathfrak{m} = \displaystyle \sum_{\alpha \in \Phi_{I}^{-}} \mathfrak{g}_{\alpha}$,
\end{center}
 we can realize $T^{1,0}X_{P}$ as being a holomorphic vector bundle, associated with the holomorphic principal $P$-bundle $P \hookrightarrow G^{\mathbbm{C}} \to X_{P}$, such that 

\begin{center}

$T^{1,0}X_{P} = \Big \{(U_{i})_{i \in J}, \underline{{\rm{Ad}}}\circ \psi_{i j} \colon U_{i} \cap U_{j} \to {\rm{GL}}(\mathfrak{m}) \Big \}$,

\end{center}
where $\underline{{\rm{Ad}}} \colon P \to {\rm{GL}}(\mathfrak{m})$ is the isotropy representation. From this, we obtain 
\begin{equation}
\label{canonicalbundleflag}
{\bf{K}}_{X_{P}}^{-1} = \det \big(T^{1,0}X_{P} \big) = \Big \{(U_{i})_{i \in J}, \det (\underline{{\rm{Ad}}}\circ \psi_{i j}) \colon U_{i} \cap U_{j} \to \mathbbm{C}^{\times} \Big \}.
\end{equation}
Since the character $\det \circ \underline{{\rm{Ad}}} \in {\text{Hom}}(P,\mathbbm{C}^{\times})$ is completely determined by its restriction to the torus $T(\Delta \backslash I)^{\mathbbm{C}}$, observing that 
\begin{equation}
\det \underline{{\rm{Ad}}}(\exp({\bf{t}})) = {\rm{e}}^{{\rm{tr}}({\rm{ad}}({\bf{t}})|_{\mathfrak{m}})} = {\rm{e}}^{- \langle \delta_{P},{\bf{t}}\rangle },
\end{equation}
$\forall {\bf{t}} \in {\rm{Lie}}(T(\Delta \backslash I)^{\mathbbm{C}})$, such that $\delta_{P} := \sum_{\alpha \in \Phi_{I}^{+} } \alpha$, and denoting $\vartheta_{\delta_{P}}^{-1} = \det \circ \underline{{\rm{Ad}}}$, it follows that 
\begin{equation}
\label{charactercanonical}
\vartheta_{\delta_{P}} = \displaystyle \prod_{\alpha \in \Delta \backslash I} \vartheta_{\varpi_{\alpha}}^{\langle \delta_{P},\alpha^{\vee} \rangle} \Longrightarrow {\bf{K}}_{X_{P}}^{-1} = \bigotimes_{\alpha \in \Delta \backslash I}\mathscr{O}_{\alpha}(\ell_{\alpha}),
\end{equation}
such that $\ell_{\alpha} = \langle \delta_{P}, \alpha^{\vee} \rangle, \forall \alpha \in \Delta \backslash I$. In particular, notice that 
\begin{equation}
\label{weightcanonical}
\phi({\bf{K}}_{X_{P}}^{-1}) = \delta_{P} = \sum_{\alpha \in \Phi_{I}^{+} } \alpha, 
\end{equation}
see Eq. (\ref{weightholomorphicvec}). If we consider the invariant K\"{a}hler metric $\rho_{0} \in \Omega^{1,1}(X_{P})^{G}$ defined by
\begin{equation}
\label{riccinorm}
\rho_{0} = \sum_{\alpha \in \Delta \backslash I}2 \pi \langle \delta_{P}, \alpha^{\vee} \rangle {\bf{\Omega}}_{\alpha},
\end{equation}
it follows that
\begin{equation}
\label{ChernFlag}
c_{1}(X_{P}) = \Big [ \frac{\rho_{0}}{2\pi}\Big].
\end{equation}
By the uniqueness of $G$-invariant representative of $c_{1}(X_{P})$, we have 
\begin{center}
\label{Ricciinvariant}
${\rm{Ric}}(\omega) = \rho_{0}$, 
\end{center}
for every $G$-invariant Kähler metric $\omega$ on $X_{P}$. In particular, $\rho_{0} \in \Omega^{1,1}(X_{P})^{G}$ defines a $G$-invariant K\"{a}hler-Einstein metric on $X_{P}$ (cf. \cite{MATSUSHIMA}).

\subsection{Borel-Weil-Bott theorem}

Let $\mathscr{W}_{\mathfrak{g}^{\mathbbm{C}}}$ be the Weyl group of $\mathfrak{g}^{\mathbbm{C}}$, i.e., the group generated by the reflections $r_{\alpha} \colon \mathfrak{h}^{\ast} \to \mathfrak{h}^{\ast}$, such that 
\begin{equation}
r_{\alpha}(\phi) = \phi - \langle \phi,\alpha \rangle \alpha^{\vee}, \ \ \phi \in \mathfrak{h}^{\ast},
\end{equation}
for every $\alpha \in \Delta$. As we see from above, the group $\mathscr{W}_{\mathfrak{g}^{\mathbbm{C}}}$ acts naturally on the set of integral weights of $\mathfrak{g}^{\mathbbm{C}}$. This action can be naturally extended to the characters through the characterization
\begin{equation}
\mathscr{W}_{\mathfrak{g}^{\mathbbm{C}}} = N_{G^{\mathbbm{C}}}(T^{\mathbbm{C}})/T^{\mathbbm{C}},
\end{equation}
where $N_{G^{\mathbbm{C}}}(T^{\mathbbm{C}}) \subset G^{\mathbbm{C}}$ is the normalizer of $T^{\mathbbm{C}}$ in $G^{\mathbbm{C}}$. Given $w \in \mathscr{W}_{\mathfrak{g}^{\mathbbm{C}}}$, for the sake of simplicity, we shall denote its representative in $N_{G^{\mathbbm{C}}}(T^{\mathbbm{C}})$ also by $w$.

Considering the weight 
\begin{equation}
\delta^{+} := \frac{1}{2}\sum_{\alpha \Phi^{+}}\alpha = \sum_{\alpha \in \Delta} \varpi_{\alpha}, 
\end{equation}
we have the following definition.
\begin{definition}
Given $\lambda \in \Lambda$, we say that $\lambda$ is a singular weight if $\langle \lambda+ \delta^{+}, \beta^{\vee}\rangle = 0$, for some $\beta \in \Phi^{+}$. We say that $\lambda \in \Lambda$ is a regular weight if it is not singular.
\end{definition}

\begin{remark}
In what follows, given $\lambda \in \Lambda$ and $w \in \mathscr{W}_{\mathfrak{g}^{\mathbbm{C}}}$, we denote 
\begin{equation}
w\star \lambda := w(\lambda + \delta^{+}) - \delta^{+}.
\end{equation}
\end{remark}

For us it will be important the following important result.

\begin{theorem}[Borel-Weil-Bott] 
\label{BWtheorem}
Let $X_{P}$ be a rational homogeneous variety defined by a parabolic Lie subgroup $P \subset G^{\mathbbm{C}}$. Given ${\bf{E}} \in {\rm{Pic}}(X_{P})$, we have the following:
\begin{enumerate}
\item[(i)] If $\phi({\bf{E}})$ is a singular weight, then 
\begin{equation}
H^{q}(X_{P},{\bf{E}}) = \{0\}, \ \ \forall q \geq 0,
\end{equation}
\item[(ii)] If $\phi({\bf{E}})$ is a regular weight, then 
\begin{equation}
H^{q}(X_{P},{\bf{E}})  \cong \begin{cases} V(w\star \phi({\bf{E}}))^{\ast}, \ \ \text{if} \ \ q = \ell(w),\\
\{0\}, \ \ \text{if} \ \ q \neq \ell(w),\end{cases}
\end{equation}
where $\ell(w)$ is the length of the unique element $w \in \mathscr{W}_{\mathfrak{g}^{\mathbbm{C}}}$, such that $w \star \phi({\bf{E}}) \in \Lambda^{+}$.
\end{enumerate}
\end{theorem}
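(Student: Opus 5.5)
The plan is to reduce the statement to the full flag variety $X_{B} = G^{\mathbbm{C}}/B$ and then run Demazure's simple-reflection induction. This is the classical route of \cite{demazure1968demonstration}, \cite{demazure1976very}.

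First, consider the fibration $\pi \colon X_{B} \to X_{P}$, whose fibre is $P/B$, itself a flag variety of the Levi factor of $P$. For ${\bf{E}} \in {\rm{Pic}}(X_{P})$, the pullback $\pi^{\ast}{\bf{E}}$ corresponds to the same weight $\phi({\bf{E}}) \in \Lambda_{P} \subset \Lambda$. Its restriction to every fibre is trivial, and a compact connected flag variety satisfies $H^{q}(P/B,\mathscr{O}) = 0$ for $q > 0$. Hence $R^{q}\pi_{\ast}\pi^{\ast}{\bf{E}} = 0$ for $q>0$ and $\pi_{\ast}\pi^{\ast}{\bf{E}} = {\bf{E}}$. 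The Leray spectral sequence then gives $H^{q}(X_{P},{\bf{E}}) \cong H^{q}(X_{B},\pi^{\ast}{\bf{E}})$ for all $q$. Since regularity and the element $w$ depend only on the weight, it suffices to prove the theorem on $X_{B}$ for an arbitrary $\lambda \in \Lambda$.

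On $X_{B}$, I would treat three steps in order.

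\textbf{(a) Dominant case (Borel-Weil).} For $\lambda \in \Lambda^{+}$, identify $H^{0}(X_{B},{\bf{E}}_{\lambda})$ with the space of functions $f$ on $G^{\mathbbm{C}}$ satisfying $f(gb) = \vartheta_{\lambda}(b)^{\pm 1}f(g)$. This is done using the big cell $U^{-}(B)$ of Remark \ref{bigcellcosntruction} and the Peter-Weyl decomposition of $\mathbbm{C}[G^{\mathbbm{C}}]$. The outcome is $V(\lambda)^{\ast}$, where the dual appears because $\mathscr{O}_{\alpha}(1)$ is built from the character $-\varpi_{\alpha}$. For the higher cohomology $H^{q}$ with $q>0$, use Kodaira vanishing: ${\bf{E}}_{\lambda}\otimes {\bf{K}}_{X_{B}}^{-1}$ has weight $\lambda + 2\delta^{+}$, which is strictly dominant, hence ample by Theorem \ref{AZADBISWAS}(4). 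Applying Kodaira vanishing to this ample bundle gives the vanishing of $H^{q}(X_{B},{\bf{E}}_{\lambda})$ for $q > 0$.

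\textbf{(b) Demazure's lemma.} For each simple root $\alpha$, consider the $\mathbbm{P}^{1}$-fibration $p_{\alpha} \colon X_{B} \to G^{\mathbbm{C}}/P_{\{\alpha\}}$ and set $n = \langle \lambda+\delta^{+},\alpha^{\vee}\rangle$. I want to show:
\begin{itemize}
\item if $n = 0$, then $H^{q}(X_{B},{\bf{E}}_{\lambda}) = 0$ for all $q$;
\item if $n > 0$, then $H^{q}(X_{B},{\bf{E}}_{\lambda}) \cong H^{q+1}(X_{B},{\bf{E}}_{r_{\alpha}\star\lambda})$ for all $q$.
\end{itemize}
The argument is as follows. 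Restricted to a fibre, ${\bf{E}}_{\lambda}$ is $\mathscr{O}(n-1)$.
\begin{itemize}
\item When $n = 0$ the fibre bundle is $\mathscr{O}(-1)$, so both direct images vanish, and Leray gives the first claim.
\item When $n>0$, $p_{\alpha\ast}{\bf{E}}_{\lambda}$ is a homogeneous bundle whose fibre is the $\mathfrak{sl}_{2}$-module of dimension $n$, and $R^{1}p_{\alpha\ast}{\bf{E}}_{\lambda} = 0$. On the other side, ${\bf{E}}_{r_{\alpha}\star\lambda}$ restricts to $\mathscr{O}(-n-1)$, so its $p_{\alpha\ast}$ vanishes. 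By relative Serre duality along the fibre (whose relative canonical bundle has weight $-\alpha$), its $R^{1}p_{\alpha\ast}$ is isomorphic to the same homogeneous bundle.
\item Leray then shifts the degree by one, giving the second claim.
\end{itemize}

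\textbf{(c) Induction on $\ell(w)$.} If $\lambda$ is singular, reflect $\lambda+\delta^{+}$ by simple reflections into the closed dominant chamber; along the way, an $n=0$ wall is reached, so every cohomology group vanishes by (b). If $\lambda$ is regular, let $w$ be the unique element with $w\star\lambda \in \Lambda^{+}$ and write $w$ as a reduced word. Each simple reflection in the word raises the cohomological degree by exactly one via (b), in the direction where $n<0$ becomes positive. Since the word is reduced, after $\ell(w)$ steps we reach the dominant case (a). This concentrates the cohomology in degree $\ell(w)$, where it equals $V(w\star\lambda)^{\ast}$.

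The main obstacle is step (b). Two points need care there. First, one must identify the two direct images as the \emph{same} $G^{\mathbbm{C}}$-homogeneous bundle, not merely bundles of equal rank. I would do this by comparing the $P_{\{\alpha\}}$-modules, namely $H^{0}(\mathbbm{P}^{1},\mathscr{O}(n-1))$ versus the dual of $H^{0}(\mathbbm{P}^{1},\mathscr{O}(n-1))$ twisted by the character $-\alpha$. Both are irreducible $\mathfrak{sl}_{2}$-modules of dimension $n$ with matching torus weights. Second, the sign conventions of $\phi$, which come from the character $-\varpi_{\alpha}$ in $\mathscr{O}_{\alpha}(1)$, must be tracked consistently so that the shifted action $\star$ and the dual $V(\cdot)^{\ast}$ come out exactly as stated.
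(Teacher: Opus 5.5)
Your outline is correct and is essentially Demazure's argument. The paper gives no proof of its own here and simply cites Serre, Bott, Demazure and Akhiezer, so your reduction to $X_{B}$, the $\mathbbm{P}^{1}$-fibration lemma and the induction along a reduced word reconstruct one of the cited proofs. The only place you use more than you need is step (a): Kodaira vanishing is valid but dispensable, since for $\lambda \in \Lambda^{+}$, applying your step (b) along any reduced word of the longest element $w_{0}$ gives $H^{q}(X_{B},{\bf{E}}_{\lambda}) \cong H^{q+\dim_{\mathbbm{C}}X_{B}}(X_{B},{\bf{E}}_{w_{0}\star\lambda})$, which vanishes for $q>0$ for dimension reasons.
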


The proof of the above theorem can be found in \cite{serre1954representations}, \cite{bott1957homogeneous}, \cite{demazure1968demonstration}, \cite{demazure1976very}, see also \cite[\S 4.3]{Akhiezer}, \cite[\S 16.4-16.5]{sale2002several}.

\begin{remark}
In the setting of the above theorem, given ${\bf{E}} \in {\rm{Pic}}(X_{P})$, such that $\phi({\bf{E}})$ is a regular weight, it follows from the Weyl dimension formula (e.g. \cite{HumphreysLAG}) that 
\begin{equation}
\label{niceEuler}
\chi(X_{P},{\bf{E}}) = (-1)^{\ell(w)}\dim_{\mathbbm{C}}V(w\star \phi({\bf{E}}))^{\ast}  = (-1)^{\ell(w)}\frac{\Pi_{\alpha \in \Phi^{+}}\langle w(\phi({\bf{E}}) + \delta^{+}), \alpha \rangle}{\Pi_{\alpha \in \Phi^{+}}\langle \delta^{+},\alpha \rangle},
\end{equation}
where $\ell(w)$ is the length of the unique element $w \in \mathscr{W}_{\mathfrak{g}^{\mathbbm{C}}}$, such that $w \star \phi({\bf{E}}) \in \Lambda^{+}$.
\end{remark}

\section{Proof of main results}

In this section, we prove Theorem \ref{theoremA}, Theorem \ref{theoremB} and Theorem \ref{theoremC}.

\subsection{Proof of Theorem A}

\begin{proof}
Given any $x \in X_{P}$, we can take a suitable oriented orthonormal local tangent frame $\{e_{\beta},J(e_{\beta})\}_{\beta \in \Phi_{I}^{+}}$ in an open neighborhood of $x \in X_{P}$, such that 
\begin{equation}
\theta_{x} = \sum_{\beta \in \Phi_{I}^{+}} {\bf{\lambda}}_{\beta}(\omega^{-1} \circ \theta)(x) e_{\beta}^{\ast} \wedge  J(e_{\beta})^{\ast},
\end{equation}
at $x \in X_{P}$, here we consider $dz_{\beta} = e_{\beta}^{\ast} + \sqrt{-1}J(e_{\beta})^{\ast}$, see for instance Remark \ref{normalizationform}. From above, considering $\theta_{x} \colon \mathcal{S}(X_{P})_{x} \to  \mathcal{S}(X_{P})_{x}$, since
\begin{equation}
(e_{\beta}^{\ast} \wedge  J(e_{\beta})^{\ast}) \cdot \psi = e_{\beta} \cdot J(e_{\beta}) \cdot \psi, 
\end{equation}
$\forall \psi \in \mathcal{S}(X_{P})_{x}$ and $\forall \beta \in \Phi_{I}^{+}$, it follows that $\theta_{x}$ acts on $\mathcal{S}(X_{P})_{x}$ via Clifford multiplication as
\begin{equation}
\theta_{x} =  \sum_{\beta \in \Phi_{I}^{+}} {\bf{\lambda}}_{\beta}(\omega^{-1} \circ \theta)(x)e_{\beta} \cdot J(e_{\beta}).
\end{equation}
Since $\theta$ and $\omega$ are $G$-invariant, it follows that 
\begin{equation}
{\bf{\lambda}}_{\beta}(\omega^{-1} \circ \theta)(x) =  \frac{\langle \phi([\theta]),\beta^{\vee} \rangle}{\langle \phi([\omega]),\beta^{\vee} \rangle},
\end{equation}
$\forall \beta \in \Phi_{I}^{+}$ and $\forall x \in X_{P}$. Thus, it is enough to compute the eigenvalues of $\theta \colon \mathcal{S}(X_{P}) \to  \mathcal{S}(X_{P})$ at $o  = eP \in X_{P}$. By choosing an enumeration $\Phi_{I}^{+} = \{ \beta_{1},\ldots,\beta_{m}\}$, and observing that 
\begin{equation}
\mathcal{S}(X_{P})_{o} \cong \underbrace{\mathbbm{C}^{2} \otimes \cdots \otimes \mathbbm{C}^{2}}_{m-\text{times}},
\end{equation}
has a basis of $2^{m}$ spinors $\psi(\epsilon_{\beta_{1}},\ldots,\epsilon_{\beta_{m}})$, such that $m = \dim_{\mathbbm{C}}(X_{P})$ and $\epsilon_{\beta_{j}} = \pm 1$, for every $j = 1,\ldots,m$, and 
\begin{equation}
A_{\beta_{j}} \cdot J(A_{\beta_{j}}) \cdot \psi(\epsilon_{\beta_{1}},\ldots,\epsilon_{\beta_{m}}) = \sqrt{-1}\epsilon_{\beta_{j}}\psi(\epsilon_{\beta_{1}},\ldots,\epsilon_{\beta_{m}}),
\end{equation}
for all $j = 1,\ldots,m$, it follows that the eigenvalues of $\theta_{o} \colon \mathcal{S}(X_{P})_{o} \to  \mathcal{S}(X_{P})_{o}$ are of the form
\begin{equation}
\sqrt{-1}\sum_{\beta \in \Phi_{I}^{+}}\epsilon_{\beta}{\bf{\lambda}}_{\beta}(\omega^{-1} \circ \theta)(o) = \sqrt{-1}\sum_{\beta \in \Phi_{I}^{+}} \epsilon_{\beta}\frac{\langle \phi([\theta]),\beta^{\vee} \rangle}{\langle \phi([\omega]),\beta^{\vee} \rangle},
\end{equation}
such that $\epsilon_{\beta} = \pm 1$, for every $\beta \in \Phi_{I}^{+}$, which concludes the proof.
\end{proof}

\subsection{Proof of Theorem B}

\begin{proof}
(A) Since ${\bf{L}} \in {\rm{Spin}}^{c}(X_{P}) \iff c_{1}({\bf{L}}) = c_{1}(X_{P}) \ ({\rm{mod}} \ 2)$, the result follows from the following facts:
\begin{enumerate}
\item[(i)] From Theorem \ref{AZADBISWAS} and Lemma \ref{funddynkinline}, it follows that 
\begin{equation}
{\bf{L}} = \bigotimes_{\alpha \Delta \backslash I} \mathscr{O}_{\alpha}(1)^{\otimes n_{\alpha}}, \ \ n_{\alpha} = \int_{\mathbbm{P}_{\alpha}^{1}}c_{1}({\bf{L}}), \ \ \forall \alpha \in \alpha \Delta \backslash I.
\end{equation}
\item[(ii)] From Eq. (\ref{weightcanonical}), we have ${\bf{K}}_{X_{P}}^{-1} = \det(T^{1,0}X_{P}) = \bigotimes_{\alpha \Delta \backslash I} \mathscr{O}_{\alpha}(1)^{\otimes \langle \delta_{P},\alpha^{\vee} \rangle}.$
\end{enumerate}
From above, since $c_{1}(X_{P}) = c_{1}({\bf{K}}_{X_{P}}^{-1})$, we conclude that 
\begin{equation}
c_{1}({\bf{L}}) = c_{1}(X_{P}) \ ({\rm{mod}} \ 2) \iff \int_{\mathbbm{P}_{\alpha}^{1}}c_{1}({\bf{L}}) = \langle \delta_{P},\alpha^{\vee} \rangle \ ({\rm{mod}} \ 2), \ \ \forall \alpha \in \Delta \backslash I.
\end{equation}
(B)  If ${\mathcal{D}}_{A} \colon {\mathcal{S}}(X_{P}) \to {\mathcal{S}}(X_{P})$ is the Dirac operator defined by the underlying Yang-Mills connection $A$ on some ${\bf{L}} \in {\rm{Spin}}^{c}(X_{P})$, it follows that 
\begin{equation}
{\mathcal{D}}_{A}^{2} = {\bf{\Delta}}_{A} + \frac{S(\omega)}{4}1_{{\mathcal{S}}(X_{P})} + \frac{F_{A}}{2},
\end{equation}
where $S(\omega)$ is the scalar curvature of the Riemannian metric underlying $\omega$ and $F_{A} = {\rm{d}}A$ is the curvature of the Yang-Mills connection $A$. Now we observe that 
\begin{equation}
S(\omega) = 2 \Lambda_{\omega}({\rm{Ric}}(\omega)) = 4 \pi \sum_{\beta \in \Phi_{I}^{+}} \frac{\langle \delta_{P}, \beta^{\vee} \rangle}{\langle \phi([\omega]), \beta^{\vee}\rangle},
\end{equation}
see for instance Eq. (\ref{tracecurvature}) and Eq. (\ref{Ricciinvariant}). Moreover, since $A$ is  Yang-Mills, it follows that $\theta = \frac{\sqrt{-1}F_{A}}{2\pi}$ is a closed $G$-invariant real $(1,1)$-form. Therefore, it follows that all the eigenvalues of the operator ${\mathcal{D}}_{A}^{2} - {\bf{\Delta}}_{A}$ are constant. From Theorem \ref{theoremA}, we have  
\begin{equation}
{\mathcal{D}}_{A}^{2} - {\bf{\Delta}}_{A} = \Bigg ( \pi \sum_{\beta \in \Phi_{I}^{+}} \frac{\langle \delta_{P}, \beta^{\vee} \rangle}{\langle \phi([\omega]), \beta^{\vee}\rangle}\Bigg) 1_{{\mathcal{S}}(X_{P})} + \frac{\pi}{\sqrt{-1}}\sum_{\beta \in \Phi_{I}^{+}} {\bf{\lambda}}_{\beta}(\omega^{-1} \circ \theta)e_{\beta} \cdot J(e_{\beta}),
\end{equation}
where $\{e_{\beta},J(e_{\beta})\}_{\beta \in \Phi_{I}^{+}}$ is a suitable oriented orthonormal local tangent frame. Since 
\begin{equation}
\phi([\theta]) = \phi(c_{1}(\bf{L})) = \phi({\bf{L}}),
\end{equation}
we conclude that 
\begin{equation}
{\rm{Spec}}\big ( {\mathcal{D}}_{A}^{2} - {\bf{\Delta}}_{A}\big ) = \Bigg \{ \pi  \sum_{\beta \in \Phi_{I}^{+}}\frac{\langle \delta_{P},\beta^{\vee} \rangle + \epsilon_{\beta} \langle \phi({\bf{L}}), \beta^{\vee}\rangle}{\langle \phi([\omega]), \beta^{\vee}\rangle} \ \ \Bigg | \ \ \epsilon_{\beta} = \pm 1, \forall \beta \in \Phi_{I}^{+}\Bigg \}.
\end{equation}
(C) From item (B), one can easily deduce that 
\begin{equation}
\lambda_{\rm{min}} \big ({\mathcal{D}}_{A}^{2} - {\bf{\Delta}}_{A} \big ) =  \pi  \sum_{\beta \in \Phi_{I}^{+}}\frac{\langle \delta_{P},\beta^{\vee} \rangle -  |\langle \phi({\bf{L}}), \beta^{\vee}\rangle|}{\langle \phi([\omega]), \beta^{\vee}\rangle},
\end{equation}
is the smallest eigenvalue of ${\mathcal{D}}_{A}^{2} - {\bf{\Delta}}_{A}$.

(D) In the setting of item (B), under the identification $\mathcal{S}(M) \cong \Lambda^{0,\ast}T^{\ast}M \otimes {\bf{E}}$, we have 
\begin{equation}
\mathcal{D}_{A} = \sqrt{2}\,\big(\bar{\partial}_{{\bf{E}}} + \bar\partial_{{\bf{E}}}^{\ast}\big) \colon \Gamma^{\infty}(\mathcal{S}(X_{P})) \to \Gamma^{\infty}(\mathcal{S}(X_{P})),
\end{equation}  
where ${\bf{E}} = \sqrt{{\bf{L}} \otimes {\bf{K}}_{X_{P}}}$. Therefore, from Borel-Weil-Bott Theorem \ref{BWtheorem}, we obtain
\begin{equation}
\ker(\mathcal{D}_{A}) \;\cong\; \bigoplus_{q=0}^{m} H^{q}(X_{P},{\bf{E}}) = \begin{cases} \{0\}, \ \ \text{if} \ \ \phi({\bf{E}}) \ \ \text{is singular},\\
H^{\ell(w)}(X_{P},{\bf{E}}) \cong V(w\star \phi({\bf{E}}))^{\ast}, \ \ \text{if} \ \ \phi({\bf{E}}) \ \ \text{is regular}.
\end{cases}
\end{equation} 
Hence, the Dirac operator ${\mathcal{D}}_{A} \colon \Gamma^{\infty}({\mathcal{S}}(X_{P})) \to \Gamma^{\infty}({\mathcal{S}}(X_{P}))$ admits a Harmonic spinor if and only if the weight
\begin{equation}
\phi({\bf{E}}) = (1/2)(\phi({\bf{L}}) - \delta_{P}) \in \Lambda_{P},
\end{equation}
is a regular weight. In the case that $\phi({\bf{E}})$ is regular, we have
\begin{equation}
{\rm{Index}}(\mathcal{D}_{A}) = \chi(X_{P},{\bf{E}}) = (-1)^{\ell(w)}\frac{\Pi_{\alpha \in \Phi^{+}}\langle w(\phi({\bf{E}}) + \delta^{+}), \alpha \rangle}{\Pi_{\alpha \in \Phi^{+}}\langle \delta^{+},\alpha \rangle},
\end{equation}
see Remark \ref{niceEuler}, which concludes the proof.
\end{proof}

\subsection{Proof of Theorem C}

\begin{proof}
Suppose that $\lambda$ is an eigenvalue of ${\mathcal{D}}_{A}$, and let $\psi \in \Gamma^{\infty}({\mathcal{S}}(X_{P}))$, such that 
\begin{equation}
{\mathcal{D}}^{2}_{A} \psi = \lambda^{2} \psi.
\end{equation}
By proposition \eqref{l2product} we can consider the $L^{2}$-scalar product 
\begin{equation}
\big < \psi_{1},\psi_{2}\big >:=\int_{X_{P}}(\psi_{1}(x),\psi_{2}(x)){\rm{d}}\mu_{\omega},
\end{equation}
for every $\psi_{1},\psi_{2} \in \Gamma^{\infty}({\mathcal{S}}(X_{P}))$, it follows that
\begin{equation}
\lambda^{2}||\psi||^{2}= \big < {\mathcal{D}}_{A}^{2} \psi,\psi\big > = ||\nabla^{A}\psi||^{2} + \big < ({\mathcal{D}}_{A}^{2} - {\bf{\Delta}}_{A} )\psi,\psi\big >,
\end{equation}
here we have used that $\big < {\bf{\Delta}}_{A}\psi,\psi \big > = \big < \nabla^{A}\psi,\nabla^{A}\psi\big >$. Therefore, since
\begin{equation}
||\nabla^{A}\psi||^{2} +\big < ({\mathcal{D}}_{A}^{2} - {\bf{\Delta}}_{A} )\psi,\psi\big > \geq \lambda_{\rm{min}} \big ({\mathcal{D}}_{A}^{2} - {\bf{\Delta}}_{A} \big ) ||\psi||^{2},
\end{equation}
we conclude that $\lambda^{2} \geq  \lambda_{\rm{min}} \big ({\mathcal{D}}_{A}^{2} - {\bf{\Delta}}_{A} \big )$. Hence, it follows from item (C) of Theorem \ref{theoremB} that
\begin{equation}
\lambda^{2} \geq \pi  \sum_{\beta \in \Phi_{I}^{+}}\frac{\langle \delta_{P},\beta^{\vee} \rangle -  |\langle \phi({\bf{L}}), \beta^{\vee}\rangle|}{\langle \phi([\omega]), \beta^{\vee}\rangle},
\end{equation}
which concludes the proof.
\end{proof}

\bibliographystyle{alpha}
\bibliography{spinc.bib}
\end{document}